\documentclass[10pt]{article}
\usepackage{amsmath,amssymb,amscd,amsthm,graphicx,enumerate,enumitem}
\usepackage[a4paper]{geometry}

\title{Isoperimetric Properties of the Mean Curvature Flow}
\author{Or Hershkovits}
\date{}

\usepackage{amsfonts}

\numberwithin{equation}{section}
\setcounter{secnumdepth}{2}
\setcounter{tocdepth}{1}
  \theoremstyle{plain}
 \newtheorem{theorem}[equation]{Theorem}
 \newtheorem*{thma}{Theorem A}
 \newtheorem*{thmb}{Theorem B}
 \newtheorem*{thmc}{Theorem C}
 \newtheorem*{thmd}{Theorem D}

 \newtheorem{lemma}[equation]{Lemma}
 
 \theoremstyle{remark}
 \newtheorem{remark}[equation]{Remark}
 \theoremstyle{remark}
\theoremstyle{definition}
 \newtheorem{definition}[equation]{Definition}

 \newtheorem{example}[equation]{Example}
\newcommand*{\rom}[1]{\expandafter\@slowromancap\romannumeral #1@}

\newcommand{\de}{\varepsilon}

\begin{document}
\maketitle

\begin{abstract}
In this paper we discuss a simple relation, which was previously missed, between the high co-dimensional isoperimetric problem of finding a filling with small volume to a given cycle, and extinction estimates for singular, high co-dimensional, mean curvature flow. The utility of this viewpoint is first exemplified by two results which, once casted in the light of this relation, are almost self evident. The first is a genuine, 5-lines proof, for the isoperimetric inequality for $k$-cycles in $\mathbb{R}^n$, with a constant differing from the optimal constant by a factor of only $\sqrt{k}$, as opposed to a factor of $k^k$ produced by all of the other soft methods (see \cite{MS,Gro}). The second is a 3-lines proof of a  lower bound for extinction for arbitrary co-dimensional, singular, mean curvature flows starting from cycles, generalizing the main result of \cite{GY}. We then turn to use the above mentioned relation to prove a bound on the parabolic Hausdorff measure of the space time track of high co-dimensional, singular, mean curvature flow starting from a cycle, in terms of the mass of that cycle. This bound is also reminiscent of a Michael-Simon Isoperimetric inequality. To prove it, we are lead to study the geometric measure theory of Euclidean rectifiable sets in parabolic space, and prove a co-area formula in that setting. This formula, the proof of which occupies most this paper, may be of independent interest.      
\end{abstract}

\section{Introduction}

A family of embeddings of a $k$-manifold $N$ in $\mathbb{R}^{n}$,  $\phi_{t}:N\rightarrow \mathbb{R}^{n}$ is said to evolve by mean curvature if at every point and time it satisfies the equation $\frac{d\phi}{dt}(x,t)=\vec{H}$  where $\vec{H}$ is the mean curvature vector. Starting with a smooth, compact embedding, this flow exists (smoothly) for some finite time, at which the flow encounters a singularity. As the embeddings need not disappear altogether when a singularity occurs, notions of weak solutions were desirable. Two such notions are the varifold flow of Brakke (see \cite{Bra}) and the level-set flow of Evans-Spruck (see \cite{ES1},\cite{ES2},\cite{ES3},\cite{ES4}) and Chen-Giga-Goto (see \cite{CGG}). If the initial surface has no boundary, via elliptic regularization, Ilmanen developed a notion of enhanced motion, which unites information from both approaches. 

Let $\mathbb{R}^{1,n}=\mathbb{R} \times \mathbb{R}^{n}$, $\mathbb{R}^{1,n}_{+}=\mathbb{R}_+ \times \mathbb{R}^{n}$. Given an integral $k$-cycle of finite mass in $\mathbb{R}^n$,   $T_{0}\in I_{k}^{loc}(\{0\} \times \mathbb{R}^n)$ Ilmanen's idea (motivated by the level-set approach) (see \cite{Ilm}) was to approximate the mean curvature flow (MCF) starting from $T_{0}$ by a family of ($k+1$)-dimensional translating solutions in $\mathbb{R}^n_+ $ which become more and more ``cylindrical'' and tall (in the first co-ordinate). At the limit, their horizontal sections will become a  $k$-dimensional Brakke flow in $\mathbb{R}^n$ (see Definition \ref{Brakke_Flow}) starting from $T_{0}$, while the limit of the ``down-scalings'' of these solutions will yield a current  $T \in I_{k+1}^{loc}(\mathbb{R}^{1,n}_+)$ with $\partial T = T_{0}$ that provides a measure-theoretic subsolution to the Brakke flow.

\begin{theorem}[{\cite[8.1]{Ilm}}]\label{enhanced_motion}
Let  $T_{0}\in I_{k}^{loc}(\{0\}\times \mathbb{R}^n)$ be a cycle of finite mass and of compact support. There exists a tuple $(T,\{\nu_t\}_{t\geq 0})$ called the \textbf{enhanced motion} where $T \in I_{k+1}^{loc}(\mathbb{R}^{1,n}_+) $ with $\partial T=T_0$ and  $\{\nu_{t}\}_{t\geq 0}$  is a Brakke flow with $\nu_0=\nu_{T_0}$   such that 
\begin{equation}\label{sub_solution}
\nu_{t}\geq\nu_{T_{t}},
\end{equation}
\begin{equation}\label{main_estimate}
 \underset{=}{\mathbf{M}}[(\pi_{x})_{\#}(T_{B})] \leq |B|^{1/2} \underset{=}{\mathbf{M}}[T_{0}], 
\end{equation}
and
\begin{equation}\label{second_estimate}
 \underset{=}{\mathbf{M}}[T_{B}] \leq (|B|+|B|^{1/2}) \underset{=}{\mathbf{M}}[T_{0}], 
\end{equation}
where $\pi_x:\mathbb{R}^{1,n}_+ \rightarrow \mathbb{R}^n$ is the projection to the $\mathbb{R}^n$ component and $T_B=T\lfloor B \times  \mathbb{R}^n$ for $B \subseteq \mathbb{R}$. $T$ is called the \textbf {undercurrent} and $T_t$, the $t$ time slice of $T$, is called the \textbf{underflow}.
\end{theorem}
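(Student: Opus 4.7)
The plan is to combine Ilmanen's elliptic regularization, which furnishes the structural existence of $(T,\{\nu_t\})$ with $\partial T=T_0$ and $\nu_t\geq\nu_{T_t}$, with a direct Cauchy--Schwarz calculation that extracts the quantitative estimates (\ref{main_estimate}) and (\ref{second_estimate}) from the parametrization of the spacetime track of a (smooth) mean curvature flow, and then to pass to the limit via lower semicontinuity of mass. Concretely, for each $\varepsilon>0$, I would take $\Sigma_\varepsilon\in I_{k+1}^{loc}(\overline{\mathbb{R}^{1,n}_+})$ minimizing the weighted area $F_\varepsilon(S):=\int e^{-y/\varepsilon}\,d\|S\|$ subject to $\partial S=T_0$; the Euler--Lagrange equation $\vec H_{\Sigma_\varepsilon}=\varepsilon^{-1}(\partial_y)^\perp$ makes $\Sigma_\varepsilon$ a translator of speed $\varepsilon^{-1}$ whose slices $\Sigma_\varepsilon\cap\{y=s\}$ evolve by MCF in the time variable $\tau=\varepsilon s$. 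Setting $T^\varepsilon:=(\phi_\varepsilon)_\#\Sigma_\varepsilon$ for $\phi_\varepsilon(y,x)=(\varepsilon y,x)$ and extracting a subsequence yields the undercurrent $T$, and passing the associated slice measures through the limit yields the Brakke flow $\{\nu_t\}$ with $\nu_0=\nu_{T_0}$.

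To prove (\ref{main_estimate}), I consider first the smooth case and parametrize the spacetime track of the MCF as $(t,p)\mapsto(t,\phi_t(p))$ with $d\phi_t/dt=\vec H$. The time-tangent $\partial_t+\vec H$ has norm $\sqrt{1+|H|^2}$ and is orthogonal to the spatial tangents; accordingly, on the tangent $(k+1)$-plane, the tangential component of $\partial_t$ has magnitude $a=1/\sqrt{1+|H|^2}$, the Jacobian of $\pi_x$ is $b=|H|/\sqrt{1+|H|^2}$, and the spacetime mass density factors as $\sqrt{1+|H|^2}\,dH^k\,dt$. Therefore
\[
\mathbf{M}\bigl((\pi_x)_\# T_B\bigr)=\int_{T_B}b\,dH^{k+1}=\int_B\int_{T_t}|H|\,dH^k\,dt.
\]
Two Cauchy--Schwarz inequalities, combined with the mass-monotonicity $\mathbf{M}(T_t)\leq\mathbf{M}(T_0)$ and Brakke's energy identity $\int_B\int|H|^2\,dH^k\,dt\leq\mathbf{M}(T_0)$, yield
\[
\int_B\int|H|\,dH^k\,dt\leq\int_B\mathbf{M}(T_t)^{1/2}\Bigl(\int|H|^2\Bigr)^{1/2}dt\leq|B|^{1/2}\mathbf{M}(T_0).
\]
The identical calculation applied to each $T^\varepsilon$ (using a gauge-fixed parametrization of $\Sigma_\varepsilon$ in which the normal velocity of the slice satisfies $|\phi_s|=\varepsilon|H|$, so that $b\,d\|T^\varepsilon\|$ reduces to $|H|\,dH^k\,d\tau$) produces an $\varepsilon$-independent bound on the approximants; the bound for $T$ then follows from lower semicontinuity of mass under weak convergence of currents.

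For (\ref{second_estimate}) I use the elementary pointwise inequality $1\leq a+b$ valid for any $a,b\in[0,1]$ with $a^2+b^2=1$:
\[
\mathbf{M}(T_B)\leq\int_{T_B}a\,dH^{k+1}+\int_{T_B}b\,dH^{k+1}=\int_B\mathbf{M}(T_t)\,dt+\mathbf{M}\bigl((\pi_x)_\# T_B\bigr),
\]
where the first equality is the coarea formula applied to the function $t$. Bounding the first term by $|B|\mathbf{M}(T_0)$ via monotonicity of Brakke mass and the second by (\ref{main_estimate}) then delivers (\ref{second_estimate}).

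The main obstacle is not the estimates above but the underlying technical apparatus of the $\varepsilon\to 0$ limit: ensuring that $\partial T=T_0$ is preserved, that slice-by-slice convergence to a genuine Brakke flow holds (rather than merely weak convergence of time-sliced measures), and that the boundary behavior of the minimizers $\Sigma_\varepsilon$ is controlled near $y=0$. This is precisely the content of Ilmanen's construction in \cite[\S8]{Ilm}, which I would invoke as a black box for the structural part; once this is in place, the quantitative estimates (\ref{main_estimate}) and (\ref{second_estimate}) reduce to the Cauchy--Schwarz calculation above, consistent with the paper's contention that once cast in the right light these inequalities are \emph{almost self-evident}.
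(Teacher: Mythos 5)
This theorem is not proved in the paper at all: it is Ilmanen's Theorem 8.1, quoted and used as a black box (the paper only recapitulates the construction of the minimizers $P^{\epsilon}$ of $I^{\epsilon}$ in Section 2, in order to prove the scaling Lemma \ref{scaling_under} and to record the heuristics of Remark \ref{heuristics}). Your proposal is therefore not competing with an argument in the paper but reconstructing the cited one, and the reconstruction is essentially faithful: the elliptic-regularization scheme, the identification of the tangential and normal components of $\partial_t$ on the space-time track ($a=1/\sqrt{1+|H|^2}$ as the coarea factor of $t$, $b=|H|/\sqrt{1+|H|^2}$ as the Jacobian of $\pi_x$), the double Cauchy--Schwarz for \eqref{main_estimate}, and the pointwise bound $1\le a+b$ for \eqref{second_estimate} are exactly the right computations. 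The one place where the sketch is thinner than it looks is the passage from the smooth model to the actual object: the two inputs you feed into Cauchy--Schwarz, namely $\underset{=}{\mathbf{M}}[T_t]\leq\underset{=}{\mathbf{M}}[T_0]$ and the energy identity $\int_B\int_{T_t}|H|^2\,d\mathcal{H}^k\,dt\leq\underset{=}{\mathbf{M}}[T_0]$, are properties of a Brakke flow, whereas the underflow $\{T_t\}$ is only a subsolution via \eqref{sub_solution} and is not itself known to be a Brakke flow, so neither bound may be asserted for $T$ directly. The estimates must really be run on the translators $P^{\epsilon}$, where $|\vec H|=|\omega^{\perp}|/\epsilon$ and the role of the energy identity is played by mass-per-unit-height bounds extracted from minimality of $I^{\epsilon}$, and only then passed to the limit by lower semicontinuity; you gesture at this with the ``gauge-fixed parametrization'' sentence and explicitly defer the technical apparatus to Ilmanen, which is acceptable given that the paper itself defers the entire theorem. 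In short: correct as a guided tour of the cited proof, not an independent one, and the genuinely nontrivial content (boundary control near $z=0$, compactness, and the justification of the energy bound on the approximants) remains exactly where you left it, in Ilmanen's Section 8.
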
 

\begin{remark}
Theorem \ref{enhanced_motion} is stated and proved in \cite{Ilm} for arbitrary ambient Riemannian manifolds. 
\end{remark}

The main conceptual observation of this paper is a relation, that was previously missed, between the above theorem, coupled with an extinction estimate, to the isoperimetric problem and isoperimetric inequalities.   Let us briefly discuss those.

In the fundamental paper \cite{FF}, Federer and Fleming proved an isoperimetric inequality (now bearing their names) stating that there exists a constant $c=c(k,n)$, depending both on the dimension and the co-dimension,  such that for every integral cycle $N \in I_{k}(\mathbb{R}^{n})$ with finite mass and compact support, there exists some $ F \in I_{k+1}(\mathbb{R}^{n})$ such that
\begin{equation}\label{isop}
\underset{=}{\mathbf{M}}[F] \leq c  \underset{=}{\mathbf{M}}[N]^\frac{k+1}{k} 
\end{equation}
and $\partial F = N$ (see also \cite[29,30]{Sim}). In particular, by \eqref{isop}, by a compactness result proved  in \cite{FF},  since mass is lower semi-continuous under weak convergence and since the boundary operator is continuous, it follows from the direct method of the calculus of variations that  $N$ has an \textbf{optimal filling} $ F_{\textrm{opt}} \in I_{k+1}(\mathbb{R}^{n})$ which has the property that $\partial F_{\textrm{opt}}=N$ and $\underset{=}{\mathbf{M}}( F_{\textrm{opt}}) \leq \underset{=}{\mathbf{M}}( F) $  for every $ F \in I_{k+1}(\mathbb{R}^{n})$ with $\partial F=N$.  

Thirteen years later, Michael and Simon showed that the constant $c$ can be taken to be independent of the co-dimension (see \cite{MS}). This was soon generalized to the case of non-positively curved Riemannian manifolds (see \cite{HS}). Another argument that applies to a much more general setting than Euclidean spaces was given in \cite{Gro} where one also gets the estimate $c=O(k^k)$.  Finally, in \cite{Alm} Almgren proved an ``optimal'' isoperimetric inequality in the Euclidean case: the constant $c$ in (\ref{isop})  corresponds to the case of a standard sphere enclosing a disk (i.e. $c_k=1/(k\omega_k^{1/k}) \approx  1/\sqrt{k}$) and equality is achieved if and only if $N$ is the standard sphere. 

\vspace{5 mm}

As a first instance of the above mentioned relation between Theorem \ref{enhanced_motion} and the isoperimetric problem,  we can generalize the main theorem of \cite{GY} (with a constant worse by a factor of two). Since the proof is so short, there is no need to postpone it for later sections.

\begin{thma}[Lower Bound on Extinction]\label{main_thm1}
If $T_0\in I_{k}^{loc}(\{0\}\times \mathbb{R}^n)$ is of finite mass and compact support, $(T,\{\nu_{t}\}_{t\geq 0})$ is an enhanced motion corresponding to $T_0$, and $F_{\textrm{opt}}$ is the optimal filling of $T_0$, then the extinction time of $\{\nu_t\}_{t\geq 0}$, $\tau=\sup\{t\geq 0 \;|\;\nu_{t}(\mathbb{R}^n)>0\}$, satisfies
\begin{equation}
\tau \geq \left(\underset{=}{\mathbf{M}}[F_{\textrm{opt}}] /\underset{=}{\mathbf{M}}[T_0]\right)^2.
\end{equation} 
\end{thma}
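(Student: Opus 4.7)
The plan is to extract an admissible filling of $T_0$ directly from the undercurrent $T$. For any $a>\tau$, the candidate filling is
$(\pi_x)_\#(T_{[0,a]}) \in I_{k+1}^{loc}(\mathbb{R}^n)$, and once I verify that it really does fill the horizontal image of $T_0$, Ilmanen's projection estimate \eqref{main_estimate} controls its mass by $a^{1/2}\,\underset{=}{\mathbf{M}}[T_0]$, while optimality of $F_{\textrm{opt}}$ bounds $\underset{=}{\mathbf{M}}[F_{\textrm{opt}}]$ from above by that mass.

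To verify admissibility, since push-forward commutes with $\partial$, it suffices to show $\partial T_{[0,a]} = T_0$ as currents in $\mathbb{R}^{1,n}_+$. The standard slicing formula gives
\begin{equation*}
\partial T_{[0,a]} \;=\; (\partial T)\lfloor [0,a]\times \mathbb{R}^n \;-\; \langle T, t, a\rangle,
\end{equation*}
and the first term is exactly $T_0$ by hypothesis. The slice $\langle T, t, a\rangle = T_a$ is a $k$-current supported in $\{a\}\times \mathbb{R}^n$, and its associated Radon measure $\nu_{T_a}$ is dominated by the Brakke flow measure $\nu_a$ thanks to the sub-solution inequality \eqref{sub_solution}; since $a>\tau$ forces $\nu_a(\mathbb{R}^n)=0$, we obtain $T_a=0$ for almost every admissible slicing time $a$, so $\partial T_{[0,a]} = T_0$.

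Combining these ingredients, for any such $a$,
\begin{equation*}
\underset{=}{\mathbf{M}}[F_{\textrm{opt}}] \;\leq\; \underset{=}{\mathbf{M}}[(\pi_x)_\#(T_{[0,a]})] \;\leq\; a^{1/2}\,\underset{=}{\mathbf{M}}[T_0].
\end{equation*}
Sending $a\downarrow\tau$ through admissible slicing values and squaring yields the claimed bound. The only nontrivial point is the vanishing of the end-time slice $T_a$ for $a>\tau$, and this is precisely where the sub-solution property \eqref{sub_solution} of the enhanced motion---as opposed to the mere existence of a Brakke flow starting from $T_0$---enters essentially.
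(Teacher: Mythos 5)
Your proof is correct and follows essentially the same route as the paper's: both arguments use the sub-solution property \eqref{sub_solution} to see that the slice of $T$ at a time $a>\tau$ vanishes, so that $(\pi_x)_\#(T_{[0,a]})$ is an admissible filling of $T_0$ whose mass is controlled by $a^{1/2}\,\underset{=}{\mathbf{M}}[T_0]$ via \eqref{main_estimate}, contradicting (or, in your version, bounding) the optimal filling. The paper phrases this as a contradiction at a fixed intermediate time $(\tau+\tau_0)/2$ while you argue directly and send $a\downarrow\tau$ through admissible slicing times, but the substance is identical.
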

\begin{proof} If the extinction time was  $\tau<\tau_0=\left(\underset{=}{\mathbf{M}}[F_{\textrm{opt}}] /\underset{=}{\mathbf{M}}[T_0]\right)^2$, by (\ref{sub_solution}) $\partial T_{[0,(\tau+\tau_0)/2)}=T_0$, and setting $S=(\pi_x)_{\#}(T_{[0,(\tau+\tau_0)/2)})$ we also have $\partial S=T_0$. By (\ref{main_estimate}) this implies
\begin{equation}
\underset{=}{\mathbf{M}}[S] < \underset{=}{\mathbf{M}}[T_0]\tau_0^{1/2} = \underset{=}{\mathbf{M}}[F_{\textrm{opt}}] 
\end{equation}
which would provide a filling of $T_0$ that is better than the optimal one.  
\end{proof}

\begin{remark}
The co-dimension one case of Theorem A implies, in particular, that if $D\subseteq \mathbb{R}^n$ is a bounded set with smooth boundary, then the extinction time for the \textit{level-set flow} of $\partial D$ satisfies  $ \tau \geq \left(\mathcal{H}^n(D)/\mathcal{H}^{n-1}(\partial D)\right)^2$, where $\mathcal{H}^k$ denotes the standard $k$-dimensional Hausdorff measure. This is true since any co-dimension one Brakke flow remains supported in the level set flow. The main result of \cite{GY} states that in such a case $ \tau \geq 2\left(\mathcal{H}^n(D)/\mathcal{H}^{n-1}(\partial D)\right)^2$.  Theorem A therefore loses a factor of two, but it allows for the generalization of \cite{GY}, both from level set flows to (special) Brakke flows, and from co-dimension one to arbitrary co-dimensions.  
\end{remark}

\begin{remark}
A partial explanation for why lower bounds on extinctions are interesting is their evident relation to the far harder and more central question of mass drop for Brakke flows (see \cite{Ilm,MSC}).
\end{remark}

\vspace{5 mm}

Next, we show how the above relationship can be used to derive isoperimetric inequalities with good constants. 
\begin{thmb}[MCF Spatial Isoperimetric Inequality]\label{main_thm2}
Let $(T,\{\nu_{t}\}_{t\geq 0})$  be an  enhanced motion  starting from an integral $k$-cycle $T_0$ of finite mass with compact support in $\mathbb{R}^{n}$. Then $\{\nu_{t}\}$ becomes extinct at finite time $\tau$, and taking $S=(\pi_{x})_{\#}(T_{[0,\tau]}))$, we have $\partial S=T_{0}$ and 
\begin{equation}
\underset{=}{\mathbf{M}}[S]\leq \frac{1}{\sqrt{4\pi}}  \underset{=}{\mathbf{M}}[T_{0}]^\frac{k+1}{k}. 
\end{equation}
\end{thmb}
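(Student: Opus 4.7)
The plan is to combine the volume estimate \eqref{main_estimate} for the undercurrent with an upper bound on the extinction time $\tau$ coming from Huisken's monotonicity formula. Applying \eqref{main_estimate} with $B = [0, \tau]$ directly gives
\begin{equation*}
\underset{=}{\mathbf{M}}[S] \;\leq\; \tau^{1/2}\, \underset{=}{\mathbf{M}}[T_0],
\end{equation*}
and the identity $\partial S = T_{0}$ follows from $\partial T_{[0,\tau]} = \partial T = T_0$, since the underflow vanishes beyond time $\tau$. The theorem therefore reduces to the extinction bound
\begin{equation*}
\tau \;\leq\; \frac{1}{4\pi}\, \underset{=}{\mathbf{M}}[T_0]^{2/k}.
\end{equation*}

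For this bound I would invoke Huisken's monotonicity formula applied to the Brakke flow $\{\nu_t\}$: for every spacetime point $(x_0, t_0)$ with $t_0 > 0$, the backward-heat Gaussian integral
\begin{equation*}
\Phi_{(x_0, t_0)}(t) \;=\; \int \frac{e^{-|x - x_0|^2/(4(t_0 - t))}}{(4\pi(t_0 - t))^{k/2}}\, d\nu_t(x)
\end{equation*}
is non-increasing in $t \in [0, t_0)$. Evaluating at $t = 0$ and bounding the exponential trivially by $1$ yields
\begin{equation*}
\lim_{t \to t_0^-} \Phi_{(x_0, t_0)}(t) \;\leq\; \Phi_{(x_0, t_0)}(0) \;\leq\; \frac{\underset{=}{\mathbf{M}}[T_0]}{(4\pi t_0)^{k/2}}.
\end{equation*}

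To close the argument, for any $t_0 \in (0, \tau)$ I pick $x_0$ in the (non-empty, compact) support of $\nu_{t_0}$; the standard Gaussian density lower bound for integer rectifiable Brakke flows gives $\Theta(\nu, (x_0, t_0)) = \lim_{t \to t_0^-} \Phi_{(x_0, t_0)}(t) \geq 1$. Combining these inequalities yields $t_0 \leq \underset{=}{\mathbf{M}}[T_0]^{2/k}/(4\pi)$, and sending $t_0 \to \tau^-$ produces the required extinction bound (and in particular shows $\tau < \infty$). The most delicate point is the density lower bound $\Theta \geq 1$ for Ilmanen's enhanced-motion Brakke flow; this is a standard consequence of monotonicity together with the integer-rectifiability of the time slices coming from the enhanced-motion construction, but it has to be verified in that specific setting. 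Granted this, the proof is a one-line application of \eqref{main_estimate} on top of monotonicity.
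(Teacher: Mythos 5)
Your proposal is correct and follows essentially the same route as the paper: Theorem B is exactly the combination of the mass estimate \eqref{main_estimate} with the extinction bound $\tau\leq \underset{=}{\mathbf{M}}[T_0]^{2/k}/(4\pi)$, which the paper isolates as Lemma \ref{extinct_bd} and proves via Huisken's monotonicity formula. The one point you flag as delicate --- the Gaussian density lower bound $\Theta\geq 1$ at arbitrary points of $\operatorname{supp}\nu_{t_0}$ --- is sidestepped in the paper: it fixes an a.e.\ time $t$ at which $\nu_t$ is integer rectifiable and not yet extinct, centers the monotonicity formula at $(p,s)$ with $s>t$ for a point $p$ where the approximate tangent plane of $\nu_t$ exists with multiplicity $\theta_0\geq 1$, and lets $s\to t^+$, so only the spatial density of the rectifiable measure is needed. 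Also, to get $\partial S=T_0$ the paper works with $S_\delta=(\pi_x)_{\#}(T_{[0,\tau+\delta)})$ (where the slice at the right endpoint certainly vanishes) and passes to the limit $\delta\to 0$ in mass norm using \eqref{main_estimate}, which is slightly more careful than asserting $\partial T_{[0,\tau]}=T_0$ directly at the exact time $\tau$.
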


As with Theorem A, the proof of Theorem B is very short, and will therefore again be included in the introduction. We will need the following lemma, which is proved by a standard use (at least in the smooth case) of the monotonicity formula (c.f \cite{Hui,Ilm2} and Lemma  \ref{mon_for}), and which will be proved in Section 2.    
\begin{lemma}\label{extinct_bd}
Let $\{\nu_{t}\}_{t\geq 0}$ be an integral Brakke flow (see Definition \ref{Brakke_Flow}) with $\nu_{0}(\mathbb{R}^n)<\infty$. Then the flow becomes extinct in finite time $\tau$ with
\begin{equation}
\tau \leq \frac{\nu_{0}(\mathbb{R}^n)^{\frac{2}{k}}}{4\pi}. 
\end{equation}
\end{lemma}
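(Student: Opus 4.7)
The plan is to deduce the bound from Huisken's monotonicity formula applied at a spacetime point lying in the support of the flow, exploiting the lower density bound that integrality provides.

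Concretely, the Gaussian density of a Brakke flow at a spacetime point $(x_0, t_0)$ with $t_0 > 0$ is defined by
\begin{equation*}
\Theta(\{\nu_t\}, x_0, t_0) = \lim_{t \uparrow t_0} \int_{\mathbb{R}^n} \frac{1}{(4\pi(t_0 - t))^{k/2}} \exp\!\left(-\frac{|x - x_0|^2}{4(t_0 - t)}\right) d\nu_t(x),
\end{equation*}
and Huisken's monotonicity formula (to be recorded in Lemma \ref{mon_for}) asserts that the integral above is non-increasing in $t \in [0, t_0)$. Therefore, evaluating at $t = 0$ one gets the bound
\begin{equation*}
\Theta(\{\nu_t\}, x_0, t_0) \leq \int_{\mathbb{R}^n} \frac{1}{(4\pi t_0)^{k/2}} \exp\!\left(-\frac{|x - x_0|^2}{4 t_0}\right) d\nu_0(x) \leq \frac{\nu_0(\mathbb{R}^n)}{(4\pi t_0)^{k/2}}.
\end{equation*}

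To obtain the lemma, I would argue by contradiction. Suppose $\{\nu_t\}$ does not become extinct by time $\tau_0 := \nu_0(\mathbb{R}^n)^{2/k}/(4\pi)$. Then there exists some $t_0 > \tau_0$ and a point $x_0 \in \operatorname{spt}(\nu_{t_0})$. The integrality hypothesis, combined with upper semicontinuity of Gaussian density and the standard fact that integral varifolds have density at least one almost everywhere on their support, yields $\Theta(\{\nu_t\}, x_0, t_0) \geq 1$. Combining with the bound above gives
\begin{equation*}
1 \leq \frac{\nu_0(\mathbb{R}^n)}{(4\pi t_0)^{k/2}},
\end{equation*}
i.e.\ $t_0 \leq \nu_0(\mathbb{R}^n)^{2/k}/(4\pi) = \tau_0$, a contradiction.

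The only point requiring care is the density lower bound $\Theta(\{\nu_t\}, x_0, t_0) \geq 1$ for $x_0 \in \operatorname{spt}(\nu_{t_0})$. In the smooth setting this is automatic, but for an integral Brakke flow it should be justified by taking a sequence $(x_i, t_i) \to (x_0, t_0)$ with $t_i < t_0$ and $x_i$ in the support of the integral varifold $\nu_{t_i}$ (density $\geq 1$ there, a.e.), passing to the limit via upper semicontinuity of the Gaussian integral in the parameter $t$ along the monotone sequence. This is the standard ingredient whose only mild subtlety is ensuring that the support is nonempty at times slightly less than $t_0$, which follows from $\nu_{t_0}(\mathbb{R}^n) > 0$ together with continuity properties of Brakke flows.
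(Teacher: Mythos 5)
Your overall strategy --- Huisken's monotonicity from time $0$ combined with a unit lower bound on a Gaussian density coming from integrality --- is the same as the paper's, but the way you localize the density bound leaves a genuine gap. You need $\Theta(\{\nu_t\},x_0,t_0)\geq 1$ at a point $x_0\in\operatorname{spt}(\nu_{t_0})$, where $\Theta$ is by definition a limit of Gaussian integrals over slices \emph{strictly before} $t_0$. The fact that the integral varifold $\nu_{t_i}$ has pointwise $k$-density $\geq 1$ at a.e.\ point of its support is a statement about the single slice $\nu_{t_i}$ alone; it does not by itself bound the parabolic quantity $\Theta(\{\nu_t\},x_i,t_i)$, which only sees the slices $\nu_t$ with $t<t_i$. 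Bridging the two requires an additional application of monotonicity with the reference time pushed slightly forward of $t_i$, and on top of that you must produce support points $(x_i,t_i)\to(x_0,t_0)$ with $t_i<t_0$; for a Brakke flow, whose mass and support can drop discontinuously, this is a clearing-out type statement and does not follow from ``$\nu_{t_0}(\mathbb{R}^n)>0$ plus continuity properties''. The lower bound $\Theta\geq1$ on the support of an integral Brakke flow is true and standard, but as sketched your justification of it is incomplete, and it is precisely the step carrying all the weight.

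The paper's proof sidesteps this by taking the limit in the other variable. Fix a time $t$ at which $\nu_t$ is integer rectifiable (a.e.\ $t$) and a point $p$ where the approximate tangent plane exists with multiplicity $\theta_0\geq1$ ($\nu_t$-a.e.\ $p$); apply the monotonicity formula between times $0$ and $t$ with reference spacetime point $(p,s)$ for $s>t$, and then let $s\downarrow t$ with the slice $\nu_t$ held fixed. The backward heat kernel at scale $\sqrt{s-t}\to0$ centered at $p$ reproduces the density of the rectifiable measure $\nu_t$ at $p$, so the left-hand side tends to $\theta_0\geq1$, while the right-hand side is at most $\nu_0(\mathbb{R}^n)/(4\pi s)^{k/2}$. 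This gives $t\leq\nu_0(\mathbb{R}^n)^{2/k}/(4\pi)$ for a.e.\ $t$ prior to extinction, which is all the lemma requires, and it needs no semicontinuity of $\Theta$ and no information about the flow on $(0,t)$ beyond the monotonicity itself. If you prefer to keep your formulation, either cite the unit-density lower bound for integral Brakke flows as a black box or switch to the a.e.-time version.
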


\begin{proof}[Proof of Theorem B] Taking $\tau=\nu_{0}(\mathbb{R}^n)^{\frac{2}{k}}/(4\pi)$ , Lemma \ref{extinct_bd} gives the first part of the theorem.  We further see, as in the proof of Theorem A, that for every $\de>0$, $S_\de:=(\pi_x)_{\#}(T_{[0,\tau+\de)})$     satisfies $\partial S=T_0$ and $\underset{=}{\mathbf{M}}[S_\de]\leq \underset{=}{\mathbf{M}}[T_{0}](\tau+\de)^{1/2}$. By \eqref{main_estimate} $S_\de \rightarrow S$ in the mass norm, and therefore also weakly. By the continuity of boundary and by the definition of $\tau$, we obtain:
\begin{equation}
\underset{=}{\mathbf{M}}[S] \leq \frac{1}{\sqrt{4\pi}} \underset{=}{\mathbf{M}}[T_{0}]^{\frac{k+1}{k}}.
\end{equation}
\end{proof}

In light of the above, Theorem B shows that the spatial track  of a singular MCF provides a filling that satisfies such a Gromov-Michael-Simon isoperimetric inequality, which is worse than the optimal one by a factor of only $\sqrt{k}$. Theorem B is particularly interesting  when the underflow is in fact a Brakke flow,  as then the filling is local.

\begin{remark}
The proof of Theorem B can be considered as a genuine proof of the  isoperimetric inequality with a relatively good constant (like all  the other proofs of the isoperimetric inequality that were mentioned, expect for \cite{FF}, it relies implicitly on the results from \cite{FF}). While not giving the optimal constant of \cite{Alm}, our proof is only a few lines long, as opposed to a very long paper. To the best of the author's knowledge, other than the proof in \cite{Alm}, our argument yields a better constant than all other proofs.   
\end{remark}

\begin{remark}
The methods described here provide a new approach for studying the isoperimetric problem in arbitrary co-dimension on general Riemannian  manifolds, which amounts to estimating the extinction time of mean curvature flow starting from the initial cycle.    
\end{remark}

Prior uses of the mean curvature flow in studying the isoperimetric problem include  \cite{Sch}, where the level-set power MCF on mean convex hypersurfaces was used to derive the optimal isoperimetric inequality in co-dimension one, and a Euclidean isoperimetric inequality for surfaces in simply connected 3-manifolds with non-positive sectional curvature (which was proven originally in \cite{Kle}),  and \cite{Top} where the curve shortening flow was used to obtain an optimal isoperimetric inequality on surfaces. In both cases the argument is based on a monotonicity of certain surface-area volume functions, and is quite different from the one here.

\vspace{5 mm}

In order to state the next result, we  first recall the notions of parabolic metric and parabolic Hausdorff measure. If $A\in \mathbb{R}^{1,n}$  is a space-time track of a mean curvature flow, then given $\lambda>0$, $\lambda A$ is not such a track, but  $\{(\lambda^2 t,\lambda x) \;\;|\;\;(t,x)\in A\}$ is. The same is true (measure theoretically and in the sense of the underlying rectifiable sets) for the space-time track of a Brakke flow. To study scale-invariant properties we would therefore need a metric that respects those scalings.
 
\begin{definition}[\cite{Whi}]
The \textbf{parabolic metric} on $\mathbb{R}^{1,n}$ is defined to be 
\begin{equation}
d_{par}((t,x),(s,y))=\mathrm{max}\{\sqrt{|t-s|},|x-y|\}.
\end{equation}
The Hausdorff measure corresponding to $d_{par}$ will be called the \textbf{parabolic Hausdorff measure} and will be denoted by $\mathcal{H}^{*}_{par}$.
\end{definition}

\begin{remark} Note that a $k+1$ plane has parabolic Hausdorff dimension $k+1$ if it is perpendicular to $\partial_t$ and parabolic Hausdorff dimension $k+2$ if it has some $\partial_t$ component.  As mean curvature flow is  a flow in time, it is therefore reasonable to measure the ($k+2$)-Hausdorff measure of the space time track $\mathcal{H}^{k+2}_{par}$.
\end{remark}
\begin{remark}
The parabolic Hausdorff measure was first introduced to the study of mean curvature flow by White for his dimension reduction principle (see \cite{Whi}). In there, the question concerned the parabolic Hausdorff dimension of certain sets (the singular stratum) and the only property of the measure that was used  was the above mentioned scaling. The relationship between the total measure of a set and its time slices was used by Federer's general co-area inequality (see \cite[2.10.25]{Fed}). It will be one of the main technical objectives of this current paper to relate the horizontal measures of the slices of a Euclidean rectifiable set in space-time to the parabolic measure of the entire set in a more precise way (i.e. to obtain a co-area formula type result). 
\end{remark}

\begin{thmc}[Parabolic Measure Estimate for the Space-Time Track of a MCF]\label{main_thm3}
There exists some universal constant $C=C(k)$ with the following property: Let $(T,\{\nu_{t}\}_{t\geq 0})$ be an enhanced motion starting from an  integral $k$-cycle $T_0$ of finite mass and compact support in $\mathbb{R}^{n}$. Letting $T=\tau(X,\theta,\xi)$ (i.e. $T$ is the integral current corresponding to the rectifiable set $X$, the multiplicity $\theta$  and the orientation $\xi$), set  $\mu$ to be the rectifiable parabolic radon measure corresponding to $(X,\theta)$, i.e. 
\begin{equation}
\mu=\theta\mathcal{H}^{k+2}_{par}\lfloor X.
\end{equation} 
Then $\mu$ is well defined (see remark below) and
\begin{equation}
\mu(\mathbb{R}^{1,n}_+) \leq C(k) \underset{=}{\mathbf{M}}[T_0]^{\frac{k+2}{k}}. 
\end{equation}  
\end{thmc}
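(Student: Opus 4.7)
The plan is to derive the estimate from the parabolic co-area formula that forms the main technical core of the paper: it should express the parabolic $(k+2)$-Hausdorff measure of a Euclidean $(k+1)$-rectifiable set $X \subseteq \mathbb{R}^{1,n}$ as the time-integral of the horizontal $k$-Hausdorff measures of its time slices. The whole point of working with the parabolic rather than the Euclidean metric is that \emph{no} Jacobian factor should appear: at a point of $X$ with Euclidean tangent plane $P$ having a non-trivial time component of the form $\partial_t + w$, the parabolic ball of radius $r$ inside $P$ is, to leading order, a product of a spatial $r$-ball in $P \cap \{t=\text{const}\}$ with a time-interval of length $\sim r^2$, so the density $\sim r^{k+2}$ is insensitive to the horizontal velocity $w$. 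Carrying the multiplicities of $T=\tau(X,\theta,\xi)$ along, this co-area identity should take the form
\begin{equation*}
\mu(\mathbb{R}^{1,n}_+) \;=\; c(k) \int_0^\infty \underset{=}{\mathbf{M}}[T_t]\, dt,
\end{equation*}
where $T_t$ is the current-theoretic $t$-slice of $T$ and $c(k)$ is a dimensional constant.

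Granting this identity, Theorem C follows from the earlier results in a few lines. By the sub-solution inequality $\nu_t \geq \nu_{T_t}$ built into the enhanced motion (Theorem \ref{enhanced_motion}) together with the fact that mass is non-increasing along any integral Brakke flow,
\begin{equation*}
\underset{=}{\mathbf{M}}[T_t] \;\leq\; \nu_t(\mathbb{R}^n) \;\leq\; \nu_0(\mathbb{R}^n) \;=\; \underset{=}{\mathbf{M}}[T_0] \qquad \text{for every } t\geq 0,
\end{equation*}
while Lemma \ref{extinct_bd} forces $\nu_t \equiv 0$, and hence (again by the sub-solution property) $T_t=0$, for every $t > \tau$ with $\tau \leq \underset{=}{\mathbf{M}}[T_0]^{2/k}/(4\pi)$. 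Multiplying the bounds,
\begin{equation*}
\mu(\mathbb{R}^{1,n}_+) \;\leq\; c(k)\,\tau\,\underset{=}{\mathbf{M}}[T_0] \;\leq\; \frac{c(k)}{4\pi}\,\underset{=}{\mathbf{M}}[T_0]^{(k+2)/k},
\end{equation*}
which is the claim with $C(k)=c(k)/(4\pi)$.

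The real content of the theorem therefore lies in proving the parabolic co-area formula, and this is where I expect the main obstacle. Classical co-area theorems either require a Riemannian ambient, or (as in Federer's general co-area inequality \cite[2.10.25]{Fed}) deliver only a one-sided bound with non-sharp constants. In our setting, the projection $\pi_t:(X,d_{par})\to(\mathbb{R},|\cdot|)$ is not even Lipschitz with respect to the intrinsic parabolic metric on $X$ --- a parabolic displacement of size $r$ along a tilted tangent plane produces a time displacement of size $r^2$, not $r$ --- so the usual machinery cannot be applied off the shelf. One must instead perform a blow-up analysis, at $\mathcal{H}^{k+1}_{eucl}$-a.e.\ point of $X$, under the parabolic scaling, matching the Euclidean tangent plane to its parabolic re-scalings, and carefully handling the horizontal exceptional set where the tangent plane has no $\partial_t$-component (such points contribute nothing to the parabolic $(k+2)$-measure but may correspond to positive $\mathcal{H}^k$-mass inside individual slices $T_t$). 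This delicate measure-theoretic analysis, rather than the current-theoretic manipulation above, is where the bulk of the paper is spent.
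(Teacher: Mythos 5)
Your proposal is correct and follows essentially the same route as the paper: the reduction in your second paragraph is precisely the paper's ``Proof of Theorem C assuming Theorem D'' (sub-solution property plus mass monotonicity to bound $\underset{=}{\mathbf{M}}[T_t]$ by $\underset{=}{\mathbf{M}}[T_0]$, the extinction bound of Lemma \ref{extinct_bd} to truncate the time integral, and the identification of $\mathcal{H}^2_{par}$ with Lebesgue measure on the time axis), and the Jacobian-free parabolic co-area identity you isolate as the real content is exactly Theorem D, which the paper proves separately in Section 3 via the decomposition into infinitesimally spatial and time-advancing parts that you anticipate.
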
      

Several remarks are in order.

\begin{remark}
Rectifiable sets are defined as a union of Lipschitz images and a set of $\mathcal{H}^{k+1}$ measure zero, which is inconsequential from the point of view of a current supported on the set. Thus, in order for the above theorem to make sense, one needs to show that for a set $B\subseteq \mathbb{R}^{1,n}$ we have $\mathcal{H}^{k+1}(B)=0$  implies $\mathcal{H}^{k+2}_{par}(B)=0$. This is part of the content of Lemma \ref{absolute_continuity}.
\end{remark}

\begin{remark}
In order for a scale invariant parabolic isoperimetric inequality concerning the undercurrent to be meaningful, one must check that the undercurrent construction itself is scale invariant. This is done in Lemma \ref{scaling_under}.
\end{remark}

\begin{remark}
Note that the constant $C$ depends only on the dimension of the current, and not on the dimension of the ambient space. Thus, the above estimate is reminiscent of the Michael-Simon isoperimetric inequality, as discussed above.
\end{remark}

To prove Theorem C, we are led to study the geometric measure theory of \underline{Euclidean}  rectifiable currents in parabolic space and the relationship between the Euclidean Hausdorff  measure of time slices of such sets and the total parabolic Hausdorff measure. As it turns out, the co-area formula in such a situation takes the form of Fubini's theorem without any co-area factor. More precisely, we will have the following theorem, which is perhaps of some interest in its own right.

\begin{thmd}[Parabolic Co-Area]\label{strong_fubini}
Let $\mathcal{M} \subseteq \mathbb{R}^{1,n}$ be a Euclidean ($k+1$)-rectifiable set of finite ($k+1$) dimensional Hausdorff measure  and let $g:\mathcal{M}\rightarrow \mathbb{R}$. Then
\begin{equation}
\int_{\mathcal{M}} gd\mathcal{H}^{k+2}_{par}=c_1(k)\int_{\mathbb{R}^{1,0}}\left(\int_{\mathcal{M}_{t}}gd\mathcal{H}^{k}\right)d\mathcal{H}^{2}_{par}(t)
\end{equation}   
where $c_1(k)$ is some universal constant.
\end{thmd}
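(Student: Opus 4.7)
The plan is to reduce Theorem D to the classical Euclidean co-area formula applied to the Lipschitz projection $\pi_t : \mathcal{M} \to \mathbb{R}$, via the Radon--Nikodym identification
\begin{equation*}
d\mathcal{H}^{k+2}_{par}\lfloor\mathcal{M} \;=\; c(k)\,|\nabla_{\mathcal{M}}\pi_t|\,d\mathcal{H}^{k+1}\lfloor\mathcal{M},
\end{equation*}
where $|\nabla_{\mathcal{M}}\pi_t|(p)$ is the length of the tangential projection of $\partial_t$ onto $T_p\mathcal{M}$, equivalently the cosine of the angle between $T_p\mathcal{M}$ and the horizontal hyperplane $\{t=t(p)\}$. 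Granted this and the elementary identity $d\mathcal{H}^2_{par}(t)=dt$ on the time axis, Federer's co-area formula gives
\begin{equation*}
\int_{\mathcal{M}} g\,d\mathcal{H}^{k+2}_{par} \;=\; c(k)\int_{\mathcal{M}} g\,|\nabla_{\mathcal{M}}\pi_t|\,d\mathcal{H}^{k+1} \;=\; c(k)\int_{\mathbb{R}^{1,0}}\int_{\mathcal{M}_t} g\,d\mathcal{H}^k\,d\mathcal{H}^2_{par}(t),
\end{equation*}
with $c_1(k)=c(k)$.

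To set up, I would use the structure theorem for rectifiable sets, together with a Whitney-extension refinement, to write $\mathcal{M}=Z\cup\bigcup_i N_i$ with $\mathcal{H}^{k+1}(Z)=0$ and each $N_i$ contained in a $C^1$ embedded $(k+1)$-submanifold of $\mathbb{R}^{1,n}$. Lemma \ref{absolute_continuity} gives $\mathcal{H}^{k+2}_{par}(Z)=0$, and Federer's co-area inequality applied to $\pi_t$ gives $\mathcal{H}^{k}(Z_t)=0$ for a.e.\ $t$, so $Z$ drops out of both sides. Each $N_i$ I split as $N_i^{hor}\cup N_i^{nhor}$ according to whether $T_p N_i\perp\partial_t$. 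On $N_i^{hor}$ Sard's theorem forces $\pi_t(N_i^{hor})$ to have Lebesgue measure zero, killing the right-hand side; the left-hand side vanishes by a direct covering estimate, since near a horizontal tangent point the time-oscillation of $N_i$ over a Euclidean ball of radius $\delta$ is $o(\delta)\ll\delta^2$, so such a piece admits a cover by $\sim (R/\delta)^{k+1}$ parabolic $\delta$-balls, contributing $R^{k+1}\delta\to 0$ to $\mathcal{H}^{k+2}_{par}$.

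The heart of the argument is the density computation on $N_i^{nhor}$. Since Lemma \ref{absolute_continuity} gives $\mathcal{H}^{k+2}_{par}\ll\mathcal{H}^{k+1}$ on $\mathcal{M}$, differentiation of Radon measures in the doubling space $(\mathbb{R}^{1,n},d_{par})$ reduces the Radon--Nikodym identity to computing
\begin{equation*}
\lim_{r\to 0}\frac{\mathcal{H}^{k+2}_{par}(\mathcal{M}\cap B^{par}_r(p))}{\mathcal{H}^{k+1}(\mathcal{M}\cap B^{par}_r(p))}
\end{equation*}
at $\mathcal{H}^{k+1}$-a.e.\ $p$. Writing $T_p\mathcal{M}=\mathrm{span}\{\cos\theta\,\partial_t+\sin\theta\,v_0,\,e_1,\ldots,e_k\}$ with $\theta\in(0,\pi/2)$ and using tangent-plane coordinates, the intersection of $T_p\mathcal{M}$ with $B^{par}_r(p)$ is, up to lower order, the box $[-r^2/\cos\theta,\,r^2/\cos\theta]\times B^{k}_r$. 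Its Euclidean $(k+1)$-measure is $(2\omega_k/\cos\theta)\,r^{k+2}$, while its spacetime image is (up to $o(r)$ in the $v_0$-direction) the cylinder $[-r^2,r^2]\times B^{k}_r$, whose parabolic $(k+2)$-measure is a universal $a(k)\,r^{k+2}$. The ratio therefore converges to $c(k)\cos\theta=c(k)|\nabla_{\mathcal{M}}\pi_t|(p)$.

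The main obstacle is justifying the transition from the tangent plane to the actual rectifiable set in this density computation, since parabolic balls are strongly anisotropic (radius $r$ in space, $r^2$ in time) and the usual Euclidean tangent-plane approximation is not automatically fine enough in the time direction. I would handle this by partitioning $N_i^{nhor}$ into Lipschitz graphs $(t,y)\mapsto(t,F(t,y))$ over $(1,k)$-coordinate planes: the $C^1$ character of $F$ implies that spatial deviations of order $o(r)$ between $\mathcal{M}$ and its tangent plane translate into time deviations of order $o(r)\ll r^2$, so the cylinder approximation remains valid at parabolic scale $r$. Everything else---verifying $d\mathcal{H}^2_{par}(t)=dt$, and keeping track of $c(k)$ across the computation---is bookkeeping.
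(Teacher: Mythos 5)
Your route is genuinely different from the paper's: you propose to identify $\mathcal{H}^{k+2}_{par}\lfloor\mathcal{M}$ as $c(k)\,|\nabla^{\mathcal{M}}t|\,\mathcal{H}^{k+1}\lfloor\mathcal{M}$ by a density computation and then quote the Euclidean co-area formula, whereas the paper never differentiates one measure against the other --- it parametrizes the time-advancing part by vertical $(M,m)$-Lipschitz maps $F:\mathbb{R}^{1,k}\supseteq A\to\mathbb{R}^{1,n}$, proves a parabolic area formula with the horizontal Jacobian $J^hF$ (Theorem \ref{par_area}), and splits the integral over the flat domain via Lemma \ref{top_dim_comp}. Your tangent-plane density computation is correct and would yield the same constant. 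But the differentiation step is a genuine gap: to upgrade the pointwise limit of $\mathcal{H}^{k+2}_{par}(\mathcal{M}\cap B^{par}_r(p))/\mathcal{H}^{k+1}(\mathcal{M}\cap B^{par}_r(p))$ to a Radon--Nikodym identity you need a Besicovitch-type differentiation theorem for \emph{parabolic} balls. Doubling of the ambient space $(\mathbb{R}^{1,n},d_{par})$ is not the relevant hypothesis, because the reference measure $\mathcal{H}^{k+1}\lfloor\mathcal{M}$ is not doubling with respect to parabolic balls; what is needed is the Besicovitch covering property for $d_{par}$ itself, a nontrivial assertion (the eccentricity of the balls degenerates as $r\to 0$) that you neither prove nor cite. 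The paper's area-formula route avoids this issue entirely.

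Second, several of your quantitative claims are false as written: $o(r)$ is \emph{not} $\ll r^2$ (take $r^{3/2}$). So the assertion that the time-oscillation of a horizontal piece over a Euclidean $\delta$-ball is $\ll\delta^2$ fails --- you need $o(\delta)/\delta^2$ layers of parabolic balls in the time direction, which still gives total measure $o(1)$ but only after the $o(\delta)$ bound is made uniform over the piece (this uniformization is exactly what the density sets $B^{l}_{\beta}$ in Lemma \ref{horizontal_part} accomplish); likewise, $C^1$ regularity does not convert spatial deviations into time deviations $\ll r^2$. The correct repair is the one the paper builds in: parametrize vertically so that time is preserved \emph{exactly}, leaving only an $o(r)$ spatial deviation, which perturbs the radius-$r$ cross-section of the parabolic ball by a relative $o(1)$. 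Finally, evaluating $\mathcal{H}^{k+2}_{par}$ on the actual set rather than on the tangent cylinder requires knowing that vertical maps which are $(M,1+o(1))$-bi-Lipschitz distort $\mathcal{H}^{k+2}_{par}$ by a factor $1+o(1)$; that is precisely the basic volume estimate \eqref{basic_vol_est}, the technical heart of Section \ref{time_advancing}, which your outline uses implicitly without proof. The skeleton of your argument is viable, but the load-bearing estimates it rests on are the ones the paper has to construct, plus a covering theorem the paper manages to do without.
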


\begin{remark}
This should be compared with the Euclidean situation where the co-area formula takes the form
\begin{equation}
\int_{\mathcal{M}} g|\nabla^\mathcal{M} t|d\mathcal{H}^{k+1}=\int_{\mathbb{R}}\left(\int_{\mathcal{M}_{t}}gd\mathcal{H}^{k}\right)d\mathcal{H}^{1}(t).
\end{equation}
\end{remark}
\begin{remark}
The absence of a co-area factor is not too surprising: considering, say, a smooth $k+1$ submanifold in $\mathbb{R}^{1,n}$ we see that if the tangent plane at some point had a time-like direction, the parabolic blow-ups at that point will contain the vector $\partial_t$, and so we are always in the ``split'' Fubini situation infinitesimally. If it were perpendicular to time, then it should not contribute to the $\mathcal{H}^{k+2}_{par}$ measure anyway. 
\end{remark}
Provided Theorem D, Theorem C follows easily.
\begin{proof}[Proof of Theorem C assuming Theorem D]
\eqref{sub_solution} and the fact that the mass always decrease along a Brakke flow, imply that for every $t$, $\underset{=}{\mathbf{M}}[T_t]\leq \nu_t (\mathbb{R}^{n}) \leq \nu_0(\mathbb{R}^{n})$. The extinction estimate of Lemma \ref{extinct_bd}, together with Theorem D give the desired result, noting that on $\mathbb{R}_+$, $\mathcal{H}^2_{par}$ is a multiple of the Lebesgue measure (see Example \ref{one_d}). 
\end{proof} 

The organization of the paper is as follows: In Section 2 we collect some preliminary results, and  in Section 3 we study the geometric measure theory (GMT) of Euclidean currents in parabolic space and in particular prove Theorem D. The rest, as the perceptive reader have noticed, was already proved in the introduction.

\vspace{5 mm}

\noindent\textbf{Acknowledgments:} I would like to thank Robert Haslhofer, Robert Kohn and Bruce Kleiner for many useful discussions. I would like to thank Jacobus Portegies for carefully reading and commenting on an earlier version of this note.

\section{Preliminaries}

We recall the notion of a Brakke flow (see \cite[3.2]{Bra}). We will follow the slightly different definition appearing in \cite{Ilm}. Let
\begin{equation}
\overline{D}_{t_{0}}f=\overline{\mathrm{lim}}_{t\rightarrow t_{0}}\frac{f(t)-f(t_{0})}{t-t_{0}}
\end{equation}
for $f:\mathbb{R}\rightarrow\mathbb{R}$.

\begin{definition}[{\cite[6.2-3]{Ilm}}]\label{Brakke_Flow}
A family of Radon measures $\{\nu_{t}\}_{t\geq 0}$ on $\mathbb{R}^n$ is a $k$-dimensional \textbf{Brakke flow} if for all $t\geq 0$ and all $\phi\in C^{1}_{c}(M,\mathbb{R}_{+})$ we have $\overline{D}_{t}\nu_{t}(\phi)\leq\mathcal{B}(\nu_{t},\phi)$ where
\begin{equation}
\mathcal{B}(\nu,\phi)=\int -\phi H^{2}+\nabla\phi \cdot S^{\perp} \cdot \vec{H} d\nu
\end{equation}
whenever  $\nu\lfloor\{\phi>0\}$ is radon $k$  rectifiable, $|\delta V|\lfloor \{\phi>0\}$ is a radon measure absolutely continuous w.r.t.  $\nu\lfloor\{\phi>0\}$ for $V=V_{\nu}\lfloor\{\phi>0\}$ and when $\phi H^{2}$ is integrable. Here $S$ is the approximate tangent space and we confuse a subspace with the projection operator to it. If either of the above conditions is not satisfied, we let $\mathcal{B}(\nu,\phi)=-\infty$ . A Brakke flow is called \textbf{integral} if for a.e. $t\geq 0$, $\nu_{t}\in IM_{k}(\mathbb{R}^n)$, the space of integer rectifiable radon measures.
\end{definition}

We will need the following generalization of Huisken's monotonicity formula (see \cite{Hui}) to the context of Brakke flows (see \cite{Ilm2}). 

\begin{theorem}\label{mon_for}
Let $\{\nu_{t}\}_{t\geq 0}$ be a $k$-dimensional integral Brakke flow in $\mathbb{R}^{n}$ with $\nu_{0}(\mathbb{R}^n)<\infty$ and let $t_{1}<t_{2}<\tau$  and $p\in\mathbb{R}^{n}$, then we have
\begin{equation}
\int \frac{1}{(4\pi(\tau-t_{2}))^{k/2}}e^{\frac{-|x-p|^{2}}{4(\tau-t_{2})}} d\nu_{t_{2}}(x) \leq \int \frac{1}{(4\pi(\tau-t_{1}))^{k/2}}e^{\frac{-|x-p|^{2}}{4(\tau-t_{1})}} d\nu_{t_{1}}(x). 
\end{equation}
$\Box$
\end{theorem}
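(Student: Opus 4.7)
The plan is to extend Huisken's classical monotonicity calculation \cite{Hui} from smooth MCF to the Brakke setting by applying the test function inequality to a truncated backward heat kernel, then letting the truncation radius tend to infinity. Fix $p \in \mathbb{R}^n$ and $\tau > 0$, abbreviate $\rho(x,t) = (4\pi(\tau-t))^{-k/2} e^{-|x-p|^2/(4(\tau-t))}$, and pick a radial cutoff $\eta_R \in C^2_c(B_{2R}(p))$ with $\eta_R \equiv 1$ on $B_R(p)$ and $|\nabla \eta_R| + |\nabla^2 \eta_R| \leq CR^{-1}$. Set $\phi_R(x,t) = \rho(x,t)\eta_R(x)$; this is compactly supported in space and smooth in $t$, so the time-dependent form of the Brakke inequality (the standard modification, adding the term $\nu_t(\partial_t \phi_R)$) applies and yields
\begin{equation*}
\overline{D}_t \nu_t(\phi_R(\cdot,t)) \leq \nu_t(\partial_t \phi_R) + \mathcal{B}(\nu_t, \phi_R(\cdot,t)).
\end{equation*}

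The key step is to process the right hand side along the same algebraic lines as Huisken's original smooth computation. Using $\nabla \rho = -\rho (x-p)/(2(\tau-t))$ and that $\vec{H}$ is normal to the approximate tangent space $S$, one expands $\phi_R = \eta_R \rho$ and completes the square in the Brakke integrand against $(x-p)^\perp/(2(\tau-t))$. The tangential derivatives of $\rho$ that are generated along the way are absorbed via the generalized first variation identity for $V_{\nu_t}$, which on the smooth locus reduces to tangential integration by parts. One arrives at
\begin{equation*}
\overline{D}_t \nu_t(\phi_R) \leq -\int \eta_R \rho \bigg|\vec{H} + \frac{(x-p)^\perp}{2(\tau-t)}\bigg|^2 d\nu_t + \mathcal{E}_R(t),
\end{equation*}
where $\mathcal{E}_R(t)$ collects the $\nabla \eta_R$ and $\nabla^2 \eta_R$ contributions and is controlled by $CR^{-1}$ times a Gaussian-weighted integral over the annulus $B_{2R}(p) \setminus B_R(p)$.

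The final step is the passage $R \to \infty$. The Brakke property together with finite initial mass implies that $\nu_t(\mathbb{R}^n)$ remains finite, and the Gaussian decay of $\rho$ is uniform on any compact subinterval of $[0,\tau)$; hence $\nu_t(\phi_R(\cdot,t)) \to \nu_t(\rho(\cdot,t))$ by dominated convergence and $\mathcal{E}_R(t) \to 0$ in $L^1_{\mathrm{loc}}(dt)$. Consequently the map $t \mapsto \nu_t(\rho(\cdot,t))$ satisfies $\overline{D}_t \leq 0$ on $[0,\tau)$; a standard Dini-type argument then upgrades this to monotonicity and evaluating at $t_1$ and $t_2$ gives the stated inequality.

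The main obstacle is the combination of two weaknesses of the Brakke framework: the test function inequality furnishes only an upper derivative $\overline{D}_t$, so the desired monotonicity must be extracted from a one-sided inequality rather than a genuine derivative; and the mean curvature is a priori controlled only by the principal term $-\int \rho H^2 d\nu_t$, so the cutoff error $\mathcal{E}_R$ involving $|\vec{H}|$ must be tamed by a Cauchy--Schwarz absorption against that principal term (which is where the factor of $R^{-1}$, rather than an uncontrolled constant, is crucial). Both points are handled in the proof in \cite{Ilm2}.
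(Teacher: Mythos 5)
The paper offers no proof of this theorem at all: it is quoted as a known result with a citation to \cite{Ilm2}, and your sketch correctly outlines the standard argument from that reference (localize Huisken's backward heat kernel, apply the time-dependent Brakke inequality, complete the square via the first variation identity, control the cutoff errors, and pass $R\to\infty$). Your proposal is therefore consistent with, and more detailed than, what the paper provides; the only points you gloss over (the time-dependent form of the Brakke inequality and the Dini-derivative upgrade to genuine monotonicity) are exactly the standard lemmas supplied in \cite{Ilm2}.
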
 

We can now give a proof of the extinction estimate of Lemma \ref{extinct_bd}. As stated in the introduction, this extinction estimate is standard, at least in the smooth case (see \cite[3.2.16]{Man} for instance). The generalization to Brakke flows is straight-forward, and will be given here for the sake of completeness.

\begin{proof}{Proof of Lemma \ref{extinct_bd}} Take $t>0$ at which the flow is integral (which happens a.e.) and not extinct, let $p$ be a point at which the approximate tangent space of $\nu_t$ exists  and has multiplicity $\theta_0 \geq 1$ and let $s>t$. Then by the monotonicity formula we get

\begin{equation}
\int \frac{1}{(4\pi(s-t))^{k/2}}e^{\frac{-|x-p|^{2}}{4(s-t)} }d\nu_{t}(x) \leq \int \frac{1}{(4\pi s)^{k/2}}e^{\frac{-|x-p|^{2}}{4s}} d\nu_{0}(x)\leq \frac{1}{(4\pi s)^{k/2}}\nu_{0}(\mathbb{R}^n)
\end{equation}
and taking the limit $s \rightarrow t$ we obtain
\begin{equation}
1\leq \theta_0  \leq \frac{1}{(4\pi t)^{k/2}}\nu_{0}(\mathbb{R}^n)
\end{equation} 
so $t \leq \frac{\nu_{0}(\mathbb{R}^n)^{\frac{2}{k}}}{4\pi}$  holds for a.e time prior to the extinction time and we are done. 
\end{proof}

In what follows, we indicate why Ilmanen's construction of the enhanced motion (\cite{Ilm}) respects the natural parabolic scalings on $\mathbb{R}^{1,n}$. We also include, for the reader's convenience, Ilmanen's heuristics for why $T$ should be seen as the space-time track of the mean curvature flow (see \cite[2.2]{Ilm}). For both purposes, we need to describe the construction in some more detail.    
Let $T_{0}\in I_{k}(\{0\} \times \mathbb{R}^{n})$ be an integral cycle of finite mass. For  $Q \in I_{k+1}(\mathbb{R}^{1,n})$ and $\epsilon>0$,  Ilmanen (\cite[2.1]{Ilm}) defined the functional
\begin{equation}
I^{\epsilon}[Q]=\frac{1}{\epsilon}\int e^{-z/\epsilon}d\nu_{Q},
\end{equation}
where $(z,x)\in \mathbb{R}^{1,n}$. By the direct method of the calculus of variations, he produced currents  $P^{\epsilon} \in I_{k+1}^{loc}(\mathbb{R}^{1,n})$ minimizing $I^\epsilon$ subject to the constraint of having boundary $T_{0}$, which additionally turn out to be supported on $\mathbb{R}^{1,n}_{+}$. The Euler-Lagrange equation of this functional is 
\begin{equation}\label{EL}
\vec{H}+S^{\perp}\cdot \frac{\omega}{\epsilon}=0,
\end{equation} 
where $\vec{H}$ is the generalized mean curvature vector, $\omega=(1,0,\ldots,0)$ and $S$ the approximate tangent space (with the usual abuse of notation identifying a subspace with the projection to it). $P^{\epsilon}$ is thus a translating solution for the MCF with velocity $v=-\frac{\omega}{\epsilon}$. Letting $\kappa_\epsilon(z,x)=(\epsilon z ,x)$ and $t=\epsilon z$, Ilmanen defines $T^{\epsilon}=(\kappa_{\epsilon})_{\#}(P^{\epsilon})$ and Ilmanen's undercurrent $T$ is, by definition, a sub-limit of those $T^{\epsilon}$ as $\epsilon \rightarrow 0$ .
\begin{remark}\label{heuristics}
Ilmanen sees those $T^{\epsilon}$ as an approximation for the space-time track of the mean curvature flow starting form $T_0$. The reason is the following (see \cite[2.2]{Ilm}): As it turns out, for $\epsilon<<1$ , $P^\epsilon$ are of height $\cong \frac{C}{\epsilon}$ and are almost cylindrical. Slicing $P^\epsilon$ at some $z$ we obtain
\begin{equation}
\vec{H}_{T^{\epsilon}_t} =    \vec{H}_{P^{\epsilon}_z} \approx \vec{H}_{P^{\epsilon}}.
\end{equation} 
Letting $\mbox{MCF}^{k}(M,t)$ be flow by mean curvature of a $k$ sub-manifold $M$ for time duration $t$ and $\mbox{HMCF}^{k}(M,t)$ be flow by only the horizontal part of the mean curvature, we see that for $s>t$
\begin{equation}
\begin{aligned}
&T^{\epsilon}_s=P^{\epsilon}_{s/\epsilon} = \mbox{MCF}^{k+1}(P^{\epsilon},s-t)_{t/\epsilon} \approx \mbox{HMCF}^{k+1}(P^{\epsilon},s-t)_{t/\epsilon}= \\
& \mbox{MCF}^{k}(P^{\epsilon}_{t/\epsilon},s-t)=\mbox{MCF}^{k}(T^{\epsilon}_{t},s-t). 
\end{aligned}
\end{equation} 
$T$, being a sub-limit of the $T^{\epsilon}$ is thus seen (intuitively) as the space-time track. 
\end{remark} 

Both classical mean curvature and Brakke flow are invariant under parabolic rescalings. The same is true for the undercurrent
\begin{lemma}\label{scaling_under}
Let $\lambda>0$  and let $\eta_\lambda,S_\lambda:\mathbb{R}^{1,n} \rightarrow \mathbb{R}^{1,n}$ be parabolic and Euclidean rescaling by $\lambda$, i.e $\eta_\lambda(t,x)=(\lambda^2 t,\lambda x)$ and $S_\lambda(t,x)=(\lambda t, \lambda x)$. If $T$ is an undercurrent corresponding to $T_0$ then $(\eta_\lambda)_{\#}(T)$ is an undercurrent corresponding to $(S_\lambda)_{\#}(T_0)$.
\end{lemma}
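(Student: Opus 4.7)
The plan is to exhibit a direct correspondence between Ilmanen's approximating translators for $T_0$ and those for $(S_\lambda)_{\#} T_0$, with matched rescaled parameter. The key bookkeeping identity is the map factorization
\[
\eta_\lambda \circ \kappa_\epsilon = \kappa_{\lambda \epsilon} \circ S_\lambda \quad \text{on } \mathbb{R}^{1,n},
\]
which one checks by evaluating both sides at $(z,x)$: both send $(z,x)$ to $(\lambda^2 \epsilon z, \lambda x)$. This identity already dictates the matching: the approximation parameter $\epsilon$ for $T_0$ should be paired with $\lambda\epsilon$ for $(S_\lambda)_{\#} T_0$.

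Next I would compute how the Ilmanen functional $I^\epsilon$ transforms under $(Q,\epsilon) \mapsto ((S_\lambda)_{\#} Q, \lambda \epsilon)$. Since Euclidean scaling by $\lambda$ multiplies the $(k+1)$-dimensional mass measure by $\lambda^{k+1}$, a direct change of variables gives
\[
I^{\lambda \epsilon}\bigl[(S_\lambda)_{\#} Q\bigr] = \frac{1}{\lambda\epsilon}\int e^{-z/(\lambda\epsilon)}\, d\nu_{(S_\lambda)_{\#}Q}(z,x) = \frac{\lambda^{k+1}}{\lambda \epsilon}\int e^{-z'/\epsilon}\, d\nu_Q(z',x') = \lambda^k I^\epsilon[Q].
\]
Because $(S_\lambda)_{\#}$ is a bijection on integral $(k+1)$-currents that intertwines the boundary operator, the two minimization problems (min $I^\epsilon$ over $\partial Q = T_0$, and min $I^{\lambda\epsilon}$ over $\partial Q = (S_\lambda)_{\#} T_0$) are conjugate via $Q \mapsto (S_\lambda)_{\#}Q$ up to the global factor $\lambda^k$. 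Consequently, if $P^\epsilon$ is an $I^\epsilon$-minimizer with boundary $T_0$, then $\tilde P^{\lambda\epsilon} := (S_\lambda)_{\#} P^\epsilon$ is an $I^{\lambda\epsilon}$-minimizer with boundary $(S_\lambda)_{\#} T_0$, hence a legitimate Ilmanen approximator for the rescaled initial data.

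Combining the two ingredients,
\[
\tilde T^{\lambda\epsilon} \;:=\; (\kappa_{\lambda\epsilon})_{\#}\tilde P^{\lambda\epsilon} \;=\; (\kappa_{\lambda\epsilon}\circ S_\lambda)_{\#} P^\epsilon \;=\; (\eta_\lambda \circ \kappa_\epsilon)_{\#} P^\epsilon \;=\; (\eta_\lambda)_{\#} T^\epsilon,
\]
so along any subsequence $\epsilon_j \to 0$ for which $T^{\epsilon_j} \to T$, pushing forward by the continuous proper map $\eta_\lambda$ gives $\tilde T^{\lambda\epsilon_j} \to (\eta_\lambda)_{\#} T$, and since $\lambda\epsilon_j \to 0$ this realizes $(\eta_\lambda)_{\#} T$ precisely as an undercurrent of $(S_\lambda)_{\#} T_0$. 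The main (and only) obstacle is really the scaling bookkeeping in the second step; everything else is formal, and the compactness/convergence required to extract the subsequential limit is inherited verbatim from Ilmanen's construction applied to the rescaled initial data.
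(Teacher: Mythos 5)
Your proposal is correct and follows essentially the same route as the paper: identify $(S_\lambda)_{\#}P^{\epsilon}$ as an $I^{\lambda\epsilon}$-minimizer with boundary $(S_\lambda)_{\#}T_0$, use the factorization $\kappa_{\lambda\epsilon}\circ S_\lambda=\eta_\lambda\circ\kappa_\epsilon$ to get $\tilde T^{\lambda\epsilon}=(\eta_\lambda)_{\#}T^{\epsilon}$, and pass to the subsequential limit. Your explicit computation $I^{\lambda\epsilon}[(S_\lambda)_{\#}Q]=\lambda^{k}I^{\epsilon}[Q]$ merely fills in a step the paper asserts without calculation.
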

\begin{proof}   Letting $P^{\epsilon}(T_0)$ be a minimizer of $I^{\epsilon}$ with boundary $T_0$ we see that $(S_\lambda)_{\#}(P^{\epsilon})$ is a minimizer of $I^{\lambda\epsilon}$ with boundary $(S_\lambda)_{\#}(T_0)$. Thus one can take $P^{\lambda\epsilon}((S_\lambda)_{\#}(T_0))=(S_\lambda)_{\#}(P^{\epsilon}(T_0))$  and so
\begin{equation}
T^{\lambda \epsilon}((S_\lambda)_{\#}(T_0))=(\kappa_{\lambda \epsilon})_{\#}((S_\lambda)_{\#}(P^{\epsilon}(T_0)))= (\kappa_{\lambda}\circ S_\lambda )_{\#}(T^\epsilon(T_0))=(\eta_{\lambda})_{\#}(T^\epsilon(T_0)).
\end{equation}
Thus, we get the desired scaling in the level of the subsequences and so at a  (possible) limit. 
\end{proof}

\begin{remark}
Without the above lemma, a parabolic Hausdorff measure estimate regarding the undercurrent would have been rather meaningless. This is not the case.
\end{remark}

\section{Parabolic GMT of Euclidean Rectifiable Sets}
This section is divided as follows: In Section \ref{basic} we will explore some basic properties of the parabolic Hausdorff measure, In Section \ref{horizontal_set} we will show that  infinitesimally spatial Euclidean ($k+1$)-rectifiable sets (see Definition \ref{inf_spa_def}) with finite volume are negligible in the parabolic setting, in Section \ref{time_advancing} we will deal with sets that have a time-like component a.e. and in Section \ref{Fubini_conclusion} we will prove the parabolic co-area formula, Theorem  D.
 
\subsection{Basic properties and Examples}\label{basic}

Recall that we are considering the space $\mathbb{R}^{1,n}=\mathbb{R}\times \mathbb{R}^{n}$. Points $p\in \mathbb{R}^{1,n}$ will be denoted by $p=(t,x)$. The $\mathbb{R}$ factor is called the time direction and the $\mathbb{R}^{n}$ factor is called the space direction. On $\mathbb{R}^{1,n}$ we consider two metrics: the standard Euclidean one $d$ with corresponding Hausdorff measure $\mathcal{H}^{*}$ and the parabolic metric $d_{par}$ with corresponding Hausdorff measure $\mathcal{H}^{*}_{par}$. $diam$ will stand for the Euclidean diameter while $diam_{par}$ will stand for the parabolic one. Rectifiable will mean, unless otherwise stated, Euclidean-rectifiable.
\begin{remark}
By the Caratheodory criterion it is clear that $\mathcal{H}^{k}_{par}$ is Borel.
\end{remark}
 
The first thing we will see is the following.    

\begin{lemma}\label{absolute_continuity}
Let $A$ be a ($k+1$)-rectifiable set in $\mathbb{R}^{1,n}$.
\begin{enumerate}
\item If $\mathcal{H}^{k+1}(A)=0$ then $\mathcal{H}^{k+2}_{par}(A)=0$.
\item If $\mathcal{H}^{k+1}(A)<\infty$ then $\mathcal{H}^{k+2}_{par}(A)<\infty$. 
\end{enumerate}
\end{lemma}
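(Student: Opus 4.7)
My plan is to derive both parts from the single density estimate
\[
\mathcal{H}^{k+2}_{par}(A) \leq c(k)\, \mathcal{H}^{k+1}(A)
\]
valid for every $(k+1)$-rectifiable $A\subseteq\R^{1,n}$, with $c(k)$ depending only on $k$. Part (1) is then the $\mathcal{H}^{k+1}(A)=0$ case and part (2) the $\mathcal{H}^{k+1}(A)<\infty$ case.

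First, for any affine $(k+1)$-plane $P\subseteq\R^{1,n}$ I will show $\mathcal{H}^{k+2}_{par}|_P\leq c(k)\,\mathcal{H}^{k+1}|_P$, with the constant actually vanishing when $P$ is horizontal. If $P$ is horizontal then the parabolic and Euclidean metrics agree on $P$ and the claim reduces to $\mathcal{H}^{k+2}|_P=0$. Otherwise, write $V=P\cap(\{0\}\times\R^n)$ (a $k$-dimensional spatial subspace), pick a unit vector $u\in P$ orthogonal to $V$ in $P$, and denote by $\sin\theta\in(0,1]$ its time-component. Parametrize $P$ by orthonormal coordinates $(s,v)\in\R\times V$ and cover the unit Euclidean $(k+1)$-cube by ``parabolic rectangles'' of $s$-side $\rho^2/\sin\theta$ and $v$-side $\rho$. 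Each image has time extent $\rho^2$ and spatial extent $\lesssim \rho$ (once $\rho\leq\tan\theta$), hence parabolic diameter $\lesssim\rho$, while the total count is $\sin\theta/\rho^{k+2}$. Summing $\rho^{k+2}$ over these pieces and letting $\rho\to 0$ yields the bound with explicit constant $c(k)\sin\theta$.

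To transfer the flat estimate to $A$, I invoke the $C^1$-rectifiability structure theorem: modulo an $\mathcal{H}^{k+1}$-null set, $A$ is a countable union of $C^1$ $(k+1)$-submanifolds $M_j\subseteq\R^{1,n}$. Decompose each $M_j$ by the tilt $\sin\theta(p)$ of its tangent plane. On regions where $\sin\theta(p)$ is bounded away from zero, a Vitali covering at scales where $M_j$ is close to $T_pM_j$, combined with the flat estimate, gives $\mathcal{H}^{k+2}_{par}\leq c(k)\mathcal{H}^{k+1}$. On the subset where the tangent plane is horizontal, a Taylor expansion shows the time variation over a spatial ball of radius $r$ is $o(r)$, so covers by parabolic rectangles adapted to this sub-linear time spread have total parabolic $(k+2)$-measure tending to zero. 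The main obstacle is the $\mathcal{H}^{k+1}$-null set left over in the decomposition, because the general implication $\mathcal{H}^{k+1}=0\Rightarrow\mathcal{H}^{k+2}_{par}=0$ is \emph{not} automatic (from $d_{par}\leq\sqrt{d}$ one only deduces $\mathcal{H}^{2(k+1)}_{par}\leq\mathcal{H}^{k+1}$, whose exponent is too large). Here rectifiability is used essentially: the null set is itself $(k+1)$-rectifiable, so the same $C^1$-decomposition can be re-applied to it; iterating and summing the resulting geometric series shows the residual null set has vanishing parabolic $(k+2)$-measure as well, which closes the argument.
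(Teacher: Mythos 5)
Your argument has a genuine gap, and it sits exactly where you flagged the ``main obstacle.'' Your proposed fix for the residual $\mathcal{H}^{k+1}$-null set is circular: a $(k+1)$-rectifiable set is by definition a countable union of Lipschitz (or $C^1$) pieces \emph{plus} an arbitrary $\mathcal{H}^{k+1}$-null set, so re-applying the $C^1$-decomposition to that null set just returns the null set itself as the new residue. There is no decay factor and hence no geometric series to sum; the argument establishes nothing about $\mathcal{H}^{k+2}_{par}$ of the leftover piece. Since part (1) of the lemma is precisely the statement about null sets, your proof as written does not prove part (1) at all, and part (2) inherits the same hole through the residue of the structure theorem.

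The deeper issue is that your premise --- that $\mathcal{H}^{k+1}(B)=0\Rightarrow\mathcal{H}^{k+2}_{par}(B)=0$ is ``not automatic'' --- is false. You are right that the crude comparison $d_{par}\leq\sqrt{d}$ only yields $\mathcal{H}^{2(k+1)}_{par}\lesssim\mathcal{H}^{k+1}$, but one should not compare covers set-for-set. Instead, take a Euclidean $\delta$-cover of $B$ by axis-parallel cubes $C_i$ with $\sum \mathrm{diam}(C_i)^{k+1}<\epsilon$ and slice each $C_i$ into roughly $[1/\mathrm{diam}(C_i)]$ slabs of time-thickness $\mathrm{diam}(C_i)^2$ and unchanged spatial sides. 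Each slab has parabolic diameter $\lesssim \mathrm{diam}(C_i)$, so its contribution is $\mathrm{diam}(C_i)^{k+2}$, and the cube contributes in total $[1/\mathrm{diam}(C_i)]\cdot\mathrm{diam}(C_i)^{k+2}=\mathrm{diam}(C_i)^{k+1}$. Summing gives $\mathcal{H}^{k+2}_{par}(B)\leq c\,\mathcal{H}^{k+1}(B)$ for \emph{arbitrary} sets $B$, with no rectifiability hypothesis. This is the paper's entire proof of both parts of the lemma. It closes your gap, and it also makes the plane-by-plane analysis, the tilt decomposition, and the Vitali covering unnecessary for this lemma (your refined estimate with the factor $\sin\theta$ is correct and is the germ of the paper's later Lemma~\ref{horizontal_part} on infinitesimally spatial sets, but it is not needed here).
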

\begin{proof}  For ($\mathit{1}$), since $\mathcal{H}^{k+1}(A)=0$, for every $\delta>0$ and $\epsilon$ there is a $\delta$ cover of $A$ by cubes $\{C_i\}$, parallel to the axes with $\sum diam(C_i)^{k+1}<\epsilon$  (by enlarging an initial small covering and swallowing the constant multiplicative factor). Looking at $C_i$ in the parabolic metric, we see that  $diam_{par}(C_i)\cong \sqrt{diam(C_i)}$.  Now, slice each $C_i$ to rectangular boxes $\{D_i^j\}$ of time-like sides of length $diam(C_i)^2$ and space-like sides of length $diam(C_i)$ . This way, the parabolic diameter of the boxes will be smaller than the Euclidean diameter of the original cube and so they provide a parabolic $\delta$ cover of $A$. We will need $[\frac{1}{diam(C_i)}]$ such boxes to cover the cube $C_i$, and
\begin{equation}
\sum_{j=1}^{[\frac{1}{diam(C_i)}]}diam_{par}(D_i^j)^{k+2}\leq[\frac{1}{diam(C_i)}] diam(C_i)^{k+2}=diam(C_i)^{k+1}
\end{equation}
and so $\mathcal{H}^{k+2}_{par,\delta}(A) < \epsilon$ and we are done. The proof of ($\mathit{2}$) is similar (see also Lemma \ref{horizontal_part}). 
\end{proof}

\begin{remark}
The first part of the lemma allows us to measure the ($k+2$)-parabolic measure of a ($k+1$)-rectifiable set in a well defined manner.
\end{remark}

The following example shows that in $\mathbb{R}^{1,0}$ there is no real difference between $\mathcal{H}^{2}_{par}$  and the standard one dimensional Lebesgue measure $\mathcal{L}^{1}$. 

\begin{example}\label{one_d}
On $\mathbb{R}^{1,0}\cong \mathbb{R}$ we have
\begin{equation}
\mathcal{H}^{2}_{par}=\frac{\alpha(2)}{2\alpha(1)}\mathcal{L}^{1}.
\end{equation} 
\end{example}
\begin{proof}
 It suffices to check it for intervals. Taking $\delta>0$ and a parabolic $\delta$ cover of $[a,b]$   $\{C_{i}\}$  we have $diam_{Euc}(C_{i})=diam_{par}(C_{i})^{2}$  and so it is a Euclidean $\delta^{2}$ cover of $[a,b]$ and
\begin{equation}
\alpha(2)\sum\left(\frac{diam_{par}(C_i)}{2}\right)^{2}=\frac{\alpha(2)}{2\alpha(1)}\alpha(1)\sum\frac{diam_{Euc}(C_{i})}{2}.
\end{equation}
\end{proof}

More generally in the full dimensional case, the parabolic Hausdorff measure is identical, up to a constant, to the Lebesgue measure.

\begin{lemma}[Top dimensional compatibility]\label{top_dim_comp}
There exist some constants $c_i=c_i(k)>0$ ($i=1,2$) such that on $\mathbb{R}^{1,k}$ we have
\begin{equation}
\mathcal{H}^{k+2}_{par} = c_2\mathcal{L}^{k+1} = c_1 \mathcal{H}^{2}_{par} \times \mathcal{H}^{k}.
\end{equation} 
\end{lemma}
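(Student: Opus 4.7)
The plan is to establish both equalities by first showing that $\mathcal{H}^{k+2}_{par}$ is a positive, locally finite, translation-invariant Borel measure on $\mathbb{R}^{1,k} \cong \mathbb{R}^{k+1}$, and then invoking the uniqueness of Haar measure. Since $d_{par}$ is translation invariant in both the time and spatial coordinates, so is $\mathcal{H}^{k+2}_{par}$; and it is Borel regular as remarked after the definition of the parabolic metric. The content of the first equality thus reduces to showing $0 < \mathcal{H}^{k+2}_{par}(Q) < \infty$ for the unit Euclidean cube $Q = [0,1]^{k+1}$.

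For the \emph{upper bound}, I would follow the covering strategy used in the proof of Lemma \ref{absolute_continuity}: given $\delta > 0$, tile $Q$ by axis-parallel boxes with time side of length $\delta^2$ and each of $k$ spatial sides of length $\delta$. Each such box has parabolic diameter comparable to $\delta$, and roughly $\delta^{-(k+2)}$ of them suffice to cover $Q$, so $\sum d_{par}^{k+2} \lesssim 1$ uniformly in $\delta$, and $\mathcal{H}^{k+2}_{par}(Q) < \infty$.

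For the \emph{lower bound}, observe that any set $C \subset \mathbb{R}^{1,k}$ with $d_{par}(C) = r$ is contained in a box with time side of length $r^2$ and $k$ spatial sides of length $r$, hence has Lebesgue measure at most $r^{k+2}$. Thus for any parabolic $\delta$-cover $\{C_i\}$ of $Q$,
\begin{equation*}
1 = \mathcal{L}^{k+1}(Q) \leq \sum_i \mathcal{L}^{k+1}(C_i) \leq \sum_i d_{par}(C_i)^{k+2},
\end{equation*}
yielding a uniform positive lower bound on $\mathcal{H}^{k+2}_{par,\delta}(Q)$. Combining the two estimates, Haar uniqueness produces $\mathcal{H}^{k+2}_{par} = c_2 \mathcal{L}^{k+1}$ for some $c_2 = c_2(k) > 0$.

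The second equality is then essentially bookkeeping. Example \ref{one_d} identifies $\mathcal{H}^2_{par}$ on $\mathbb{R}^{1,0}$ as $\frac{\alpha(2)}{2\alpha(1)}\mathcal{L}^1$, while the standard Euclidean normalization gives $\mathcal{H}^k = \mathcal{L}^k$ on $\mathbb{R}^k$. Fubini then yields $\mathcal{H}^2_{par} \times \mathcal{H}^k = \frac{\alpha(2)}{2\alpha(1)}\mathcal{L}^{k+1}$; matching this with the first equality determines the ratio $c_1/c_2$ and closes the argument. The only mildly non-routine step is the volume-against-parabolic-diameter lower bound; everything else is standard measure-theoretic machinery.
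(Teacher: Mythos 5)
Your proof is correct and follows essentially the same route as the paper: local finiteness via the $\delta^2\times\delta^k$ tiling (as in Lemma \ref{absolute_continuity}), positivity by enclosing each cover element in a product box and comparing Lebesgue measure against $d_{par}^{k+2}$, translation invariance plus Radon--Nikodym/Haar uniqueness for the first equality, and Example \ref{one_d} with Fubini for the second.
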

\begin{proof} The  second equality is clear from Example \ref{one_d}. For the first equality,  note that by Lemma \ref{absolute_continuity}, $\mathcal{H}^{k+2}_{par}$ is a radon measure that is absolutely continuous w.r.t the Lebesgue measure. As both $\mathcal{H}^{k+2}_{par}$ and  $\mathcal{L}^{k+1}$ are invariant under translations, by Radon-Nikodym we obtain
\begin{equation}
\mathcal{H}^{k+2}_{par} = c_2 \mathcal{L}^{k+1}
\end{equation} 
for some $c_2\geq 0$. In order to conclude, it will suffice to show that $\mathcal{H}^{k+2}_{par}([0,1]\times [0,1]^{k}) > 0$. Otherwise, for every $\epsilon$, there would be a cover $\{C_i\}$ such that $\sum diam_{par}(C_{i})^{k+2} < \epsilon$, but then by perhaps enlarging $C_{i}$ a little bit, we get $\tilde{C}_{i}$ of the form $\tilde{C}_{i}=[a_i,b_i]\times D_{i}$ and with 
\begin{equation}
\sqrt{|b_i-a_i|}=diam_{par}(\tilde{C}_{i})=diam_{par}(C_{i})=diam(D_i)
\end{equation}
but then
\begin{equation}
\sum \mathcal{L}^{k+1}(\tilde{C}_i) = \sum diam_{par}(\tilde{C}_{i})^{k+2} < \epsilon  
\end{equation}
so the $C_{i}$ can not be a cover of the unit cube. 
\end{proof}

The situation with lower dimensional parabolic Hausdorff measures is very different, as Example \ref{1d_example} (and indeed the entire Section \ref{time_advancing}) will indicate.

\subsection{Infinitesimally Spatial Rectifiable Sets}\label{horizontal_set}
This subsection deals with the validity of the parabolic co-area formula (Theorem D) for rectifiable sets which are infinitesimally spatial.
\begin{definition}\label{inf_spa_def}
A $(k+1)$ rectifiable set $B$ in $\mathbb{R}^{1,n}$ is called \textbf{infinitesimally spatial} if for $\mathcal{H}^{k+1}$ a.e $p \in B $ we have $\partial_t \perp T_p B$.  
\end{definition} 

\begin{lemma}\label{horizontal_part}
If $B$ be is a ($k+1$)-infinitesimally spatial set in $\mathbb{R}^{1,n}$ and $\mathcal{H}^{k+1}(B)<\infty$ then $\mathcal{H}^{k+2}_{par}(B)=0$. 
\end{lemma}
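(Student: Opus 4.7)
The plan is to refine the cube-slicing argument used in Lemma \ref{absolute_continuity}, gaining an extra factor of $\eta \to 0$ from the infinitesimally spatial assumption. First I would reduce to studying a $C^1$ (or at worst Lipschitz) piece: by the definition of rectifiability, write $B = N \cup \bigcup_{j} f_j(E_j)$ with $\mathcal{H}^{k+1}(N)=0$, $E_j \subseteq \mathbb{R}^{k+1}$ bounded and measurable, and $f_j : E_j \to \mathbb{R}^{1,n}$ Lipschitz. By Lemma \ref{absolute_continuity}, $\mathcal{H}^{k+2}_{par}(N)=0$, so it suffices to prove the claim for $f_j(E_j)$. By Rademacher's theorem and a further decomposition (splitting off a small-measure Lusin-type set, which again falls under Lemma \ref{absolute_continuity}), I may assume $f_j$ is differentiable on all of $E_j$, and since $B$ is infinitesimally spatial, for $\mathcal{H}^{k+1}$-a.e. $x \in E_j$ the time component $\pi_t \circ Df_j(x)$ vanishes.

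Next, fix $\eta > 0$. At $\mathcal{H}^{k+1}$-a.e.\ $x \in E_j$, the differentiability of $f_j$ together with $\pi_t \circ Df_j(x)=0$ gives a radius $r_x > 0$ such that for every $0 < r < r_x$, the image $f_j(B_r(x) \cap E_j)$ is contained in a box $Q(x,r)$ parallel to the axes whose spatial side is $\leq 2Lr$ (with $L$ the Lipschitz constant of $f_j$) and whose time side is $\leq \eta r$. Using Vitali's covering theorem on the fine cover $\{B_r(x): x \in E_j, \ 0 < r < \min(r_x, \delta)\}$, I extract a countable subcollection of disjoint balls $B_{r_i}(x_i)$ whose enlargements $B_{5r_i}(x_i)$ still cover $E_j$ up to a null set. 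Disjointness gives $\sum_i r_i^{k+1} \leq C(k)\,\mathcal{L}^{k+1}(\tilde E_j) < \infty$, where $\tilde E_j$ is a bounded neighbourhood of $E_j$.

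Now, exactly as in the proof of Lemma \ref{absolute_continuity}, I slice each box $Q(x_i, 5r_i)$ (of spatial side $\lesssim L r_i$ and time side $\lesssim \eta r_i$) into rectangular boxes of time side $(L r_i)^2$ and spatial side $L r_i$; each slice has parabolic diameter $\lesssim L r_i \leq L\delta$, so the collection is a parabolic $L\delta$-cover of $f_j(E_j)$. The number of slices needed per box is $\lesssim \eta r_i / (L r_i)^2 = \eta/(L^2 r_i)$, so the contribution to the parabolic pre-measure from $Q(x_i,5r_i)$ is
\begin{equation}
\frac{\eta}{L^2 r_i}\,(L r_i)^{k+2} \;=\; \eta\, L^{k}\, r_i^{k+1}.
\end{equation}
Summing over $i$ gives $\mathcal{H}^{k+2}_{par, L\delta}(f_j(E_j)) \leq \eta\, L^{k} C(k)\, \mathcal{L}^{k+1}(\tilde E_j)$. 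Sending $\delta \to 0$ and then $\eta \to 0$ yields $\mathcal{H}^{k+2}_{par}(f_j(E_j)) = 0$, and summing over $j$ finishes the argument.

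The main obstacle I anticipate is the combinatorial care needed to turn the pointwise infinitesimally spatial condition into a simultaneous uniform thinness bound on a countable cover with $\sum r_i^{k+1}$ controlled; this is what forces the Vitali/Besicovitch extraction and the reduction to $C^1$ pieces via Rademacher and Lusin. Everything else is a bookkeeping variant of the cube-slicing in Lemma \ref{absolute_continuity}, with the dividend that the time-thickness $\eta r$ replaces the full $r$, giving the $\eta$-gain that drives the measure to zero.
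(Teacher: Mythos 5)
Your argument is correct in substance, but it proceeds along a genuinely different route from the paper's. You work \emph{upstairs}, in the parametrizing domain: you invoke the Lipschitz-image structure of rectifiability, Rademacher's theorem, and the identification of the approximate tangent plane with the image of $Df_j$, so that the infinitesimally spatial hypothesis becomes $\pi_t\circ Df_j=0$ a.e.; pointwise differentiability then hands you, at each good point, boxes of time-thickness $\eta r$ versus spatial width $Lr$, and a Vitali extraction in the domain controls $\sum r_i^{k+1}$. The paper instead works \emph{downstairs}, intrinsically on the set $B$: it never parametrizes, but uses a.e.\ density ratios of $B$ in axis-parallel rectangles (its conditions (\ref{ratio_bd1})--(\ref{ratio_bd2})) to define strata $B^{l}_{\beta}$ on which the mass concentrates in time-slabs of thickness $\beta r$, and then a disjointness argument (\ref{fewer}) to show the points of $B^l_\beta$ themselves lie in such slabs, after which the same cube-slicing count gives the factor $6\beta$. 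Both proofs extract exactly the same quantitative gain -- only an $\eta$ (resp.\ $\beta$) fraction of the parabolic boxes in the Lemma \ref{absolute_continuity} slicing is actually needed -- so the approaches are interchangeable; yours is more transparent where the Lipschitz charts are available, while the paper's avoids any appeal to the parametrization and to Rademacher. Three small points to tidy in your write-up: (i) the ``Lusin-type set'' you propose to discard would have small \emph{positive} measure, and its image is then not covered by Lemma \ref{absolute_continuity}(1); but you do not need Lusin at all, since your Vitali argument only uses pointwise differentiability at a.e.\ point (each point gets its own $r_x$), so simply drop that step; (ii) to guarantee $f_j(B_{5r_i}(x_i)\cap E_j)\subseteq Q(x_i,5r_i)$ you should run the fine cover with $0<r<\min(r_x/5,\delta)$; (iii) the slice count is really $\lceil \eta/(L^2 r_i)\rceil$, which is comparable to $\eta/(L^2r_i)$ only once $\delta<\eta/L^2$ -- your order of limits ($\delta\to 0$ before $\eta\to 0$) takes care of this, but it is worth saying so.
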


\begin{proof}  The argument is a refined version of the one in Lemma \ref{absolute_continuity}. Fix $\beta>0$. For $l=1,2,\ldots$ let $B^{l}_{\beta}$ be the set of points at which the set it $\beta$ close to being spatial at scales  $<1/l$. More precisely, $B^{l}_{\beta}$ consists of those $(t,x)$ in $B$ such that for every $r<1/l$  we have
 \begin{equation}\label{ratio_bd1}
1-\beta \leq \frac{\mathcal{H}^{k+1}(B\cap I^{n+1}((t,x),r))}{(2r)^{k+1}}  \leq 1+\beta,
\end{equation}
and also
\begin{equation}\label{ratio_bd2}
1-2\beta \leq \frac{\mathcal{H}^{k+1}(B\cap I^{n+1}((t,x),r)\cap ([t-\beta r,t+\beta r]\times \mathbb{R}^{n}))}{(2r)^{k+1}}  \leq 1+2\beta,
\end{equation} 
where $ I^{n+1}((t,x),r)$ is the rectangle parallel to the axes with center $(t,x)$ and (Euclidean) sides $2r$. We claim that for $\beta$ sufficiently small, if $(t,x)\in B^{l}_{\beta}$ then for every $r<1/(2l)$  
\begin{equation}\label{fewer}
B^{l}_{\beta} \cap I^{n+1}((t,x),r)\cap ([t-3\beta r,t+3\beta r]\times \mathbb{R}^{n})^c =\emptyset.
\end{equation}
Otherwise, given $(s,y)$ in the left hand side of (\ref{fewer}) we get that
\begin{equation}
B\cap I^{n+1}((s,y),r)\cap ([s-\beta r,s+\beta r]\times \mathbb{R}^{n}) \subseteq B\cap I^{n+1}((t,x),2r)
\end{equation}
but
\begin{equation}
(I^{n+1}((s,y),r)\cap ([s-\beta r,s+\beta r]\times \mathbb{R}^{n})) \bigcap (I^{n+1}((t,x),2r)\cap ([t-2\beta r,t+2\beta r]\times \mathbb{R}^{n}))=\emptyset. 
\end{equation}
For small $\beta$, this would contradict (\ref{ratio_bd1}) for the point $(t,x)$ at scale $2r$, as by the above disjointness and by using (\ref{ratio_bd2}) first for $(t,x)$ at scale $2r$ and then for $(s,y)$ at scale $r$ we get
\begin{equation}
\frac{\mathcal{H}^{k+1}(B\cap I^{n+1}((t,x),2r))}{(4r)^{k+1}} \geq (1-2\beta)+(1-2\beta)/2^{k+1}, 
\end{equation}
which is bigger than $(1+\beta)$ when $\beta$ is very small.

Now, since $\mathcal{H}^{k+1}(B)<\infty$, by enlarging an efficient $\delta$-cover to become one with cubes (gaining a multiplicative factor) we see that there is a constant $A$ (independent of $\beta,l$) such that for every $\delta>0$ there is a cover $\{C_i\}$ of $B^{l}_{\beta}$ by cubes such that $\sum diam(C_i)^{k+1} <A$ and $diam(C_i)<\delta$.  Taking $\delta<1/(4l)$ and such a good $\delta$-cover $\{C_i\}$, looking at $C_i$ in the parabolic metric, we see that  $diam_{par}(C_i)\leq \sqrt{diam(C_i)}$.  Now, slice each $C_i$ to rectangular boxes $\{D_i^j\}$ of time-like sides of length $diam(C_i)^2$ and the initial space-like sides. This way, the parabolic diameter of the boxes will be smaller than the Euclidean diameter of the original cube. We will need $\frac{1}{diam(C_i)}$ such rectangles and as in Lemma \ref{absolute_continuity} we see that
\begin{equation}
\sum_{j=1}^{[\frac{1}{diam(C_i)}]}diam_{par}(D_i^j)^{k+2}\leq[\frac{1}{diam(C_i)}] diam(C_i)^{k+2}=diam(C_i)^{k+1}
\end{equation}
and so $\mathcal{H}^{k+2}_{par}(B^{l}_{\beta}) < A$. In fact, our situation is  much better! Indeed, in light of (\ref{fewer}), only $6\beta[\frac{1}{diam(C_i)}]$ out of the $[\frac{1}{diam(C_i)}]$  rectangles can contribute to covering $B^{l}_{\beta}$. For if $(t,x)\in C_i$ we have $C_i \subseteq I^{n+1}((t,x),diam(C_i))$  and so by (\ref{fewer})
\begin{equation}
C_i \cap B^{l}_{\beta} \subseteq [t-3\beta diam(C_i),t+3\beta diam(C_i)]\times \mathbb{R}^{n}. 
\end{equation}
This gives
\begin{equation}
\mathcal{H}^{k+2}_{par}(B^{l}_{\beta})<6\beta A,
\end{equation} 
and so
\begin{equation}
\mathcal{H}^{k+2}_{par}\left(\bigcup_{l=1}^{\infty}B^{l}_{\beta}\right) \leq 6\beta A.
\end{equation}
As $\mathcal{H}^{k+1}\left(B-\bigcup_l B^{l}_{\beta}\right)=0$ we get
\begin{equation}
\mathcal{H}^{k+2}_{par}(B) \leq 6\beta A
\end{equation}
by the first part of Lemma \ref{absolute_continuity}. By the arbitrariness of $\beta$ we are done. 
\end{proof}

Computing the right hand side of the parabolic co-area formula in the infinitesimally spatial case is easier.

\begin{lemma}
For $B$  infinitesimally spatial we have
\begin{equation}
\int_{\mathbb{R}^{1,0}} \mathcal{H}^{k}(B_{t})d\mathcal{H}^{2}_{par}(t)=0.
\end{equation}
\end{lemma}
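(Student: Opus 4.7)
The plan is to reduce the claim to the classical Euclidean co-area formula. Since $B$ is a $(k+1)$-rectifiable set in $\mathbb{R}^{1,n}$ (and without loss of generality of finite $\mathcal{H}^{k+1}$ measure, the general case following by exhaustion), applying Federer's co-area formula to $B$ and the Lipschitz function $\pi_t : \mathbb{R}^{1,n} \to \mathbb{R}$ sending $(t,x) \mapsto t$ yields
\[
\int_B |\nabla^B \pi_t|\, d\mathcal{H}^{k+1} \;=\; \int_\mathbb{R} \mathcal{H}^k(B_t)\, d\mathcal{L}^1(t),
\]
where $\nabla^B \pi_t$ at $p$ denotes the tangential gradient, i.e.\ the orthogonal projection of the ambient gradient $\partial_t$ onto the approximate tangent plane $T_p B$.

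The infinitesimally spatial hypothesis states precisely that $\partial_t \perp T_p B$ for $\mathcal{H}^{k+1}$-almost every $p \in B$. Hence the tangential gradient vanishes $\mathcal{H}^{k+1}$-a.e.\ on $B$, the left hand side of the displayed identity is zero, and consequently $\mathcal{H}^k(B_t) = 0$ for $\mathcal{L}^1$-almost every $t$. By Example \ref{one_d}, $\mathcal{H}^2_{par}$ is a constant multiple of $\mathcal{L}^1$ on $\mathbb{R}^{1,0}$, so the set of $t$ where $\mathcal{H}^k(B_t) > 0$ is also $\mathcal{H}^2_{par}$-null, and the desired integral vanishes.

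I expect no real obstacle here. The entire argument rests on the observation that the co-area Jacobian $|\nabla^B \pi_t|$ is $\mathcal{H}^{k+1}$-null under the infinitesimally spatial assumption, together with the already-established compatibility between $\mathcal{H}^2_{par}$ and $\mathcal{L}^1$ on the one-dimensional time line, which converts the Euclidean co-area identity into its parabolic analogue in this single variable.
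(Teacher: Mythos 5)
Your proposal is correct and follows exactly the paper's route: the paper's one-line proof also invokes the Euclidean co-area formula to conclude that $\mathcal{H}^{k}(B_t)=0$ for a.e.\ $t$, and your argument simply spells out the vanishing of the tangential gradient and the identification of $\mathcal{H}^{2}_{par}$ with a multiple of $\mathcal{L}^1$ via Example \ref{one_d}.
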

\begin{proof} This follows directly from the Euclidean co-area formula, as it implies that a.e. level set has $\mathcal{H}^{k}(B_t)=0$. 
\end{proof}

\subsection{The Time Advancing Part}\label{time_advancing}    

We first make several definitions.

\begin{definition}\label{time_advance_def}
A $k+1$ rectifiable set $\mathcal{M}$ is called \textbf{time-advancing} if for $\mathcal{H}^{k+1}$ a.e. $p\in \mathcal{M}$ we have that $\partial_t$ is not perpendicular to $T_p \mathcal{M}$ . 
\end{definition}

For technical reasons, it will be easier to work with the definitions below.

\begin{definition}
A Lipschitz (w.r.t the standard Euclidean metric) map $F:\mathbb{R}^{1,k} \supseteq A \rightarrow \mathbb{R}^{1,n}$ will be called \textbf{vertical} if  $\pi_{t}(F(t,x))=t$ for every $x\in \mathbb{R}^{k},t\in \mathbb{R}$. Here $\pi_{t}$ is the projection to the time  factor.
\end{definition}

\begin{definition}
A vertical map $F:\mathbb{R}^{1,k} \supseteq A \rightarrow \mathbb{R}^{1,n}$ will be called $\mathbf{(M,m)}$ \textbf{Lipschitz} if it is $M$ Lipschitz in the Euclidean sense, and if its restriction to every time slice is $m$ Lipschitz.
\end{definition}

\begin{definition}\label{ver_rect_def}
A set $\mathcal{M} \subseteq \mathbb{R}^{1,n}$ is said to be $(1,k)$ \textbf{vertically rectifiable} if one can write $\mathcal{M}=\mathcal{M}_{0}\cup \bigcup_{i\geq 1} \mathcal{M}_{i}$ where $\mathcal{H}^{k+1}(\mathcal{M}_{0})=0$ and where $\mathcal{M}_i=F_{i}(A_{i})$ , $A_{i}\subseteq \mathbb{R}^{1,k}$ are measurable and $F_{i}$ are $(M_i,m_i)$ Lipschitz.  
\end{definition}

The following lemma shows the equivalence between the geometric definition, Definition \ref{time_advance_def}, and the technical definition, Definition \ref{ver_rect_def}.

\begin{lemma}
Let $\mathcal{M}$ be a $k+1$ rectifiable in $\mathbb{R}^{1,n}$. Then $\mathcal{M}$ is time advancing iff it is vertically rectifiable.
\end{lemma}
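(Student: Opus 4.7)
\textbf{Plan for the easy direction} ($\Leftarrow$): Write $\mathcal{M} = \mathcal{M}_0 \cup \bigcup_i F_i(A_i)$ with each $F_i$ vertical and $(M_i, m_i)$-Lipschitz, say $F_i(t,y) = (t, f_i(t,y))$. Extend $f_i$ by Kirszbraun to a Lipschitz map on all of $\mathbb{R}^{1,k}$ and set $\tilde F_i(t,y) = (t, \tilde f_i(t,y))$, a vertical Lipschitz extension. By Rademacher, $\tilde F_i$ is differentiable $\mathcal{H}^{k+1}$-a.e.; at each such point the time-direction partial derivative is
\begin{equation*}
D\tilde F_i(t,y)\cdot\partial_t = (1,\, \partial_t \tilde f_i(t,y)),
\end{equation*}
whose first coordinate is $1\neq 0$. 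Discarding the null set where $D\tilde F_i$ has non-maximal rank (its image in $\mathcal{M}$ has vanishing $\mathcal{H}^{k+1}$-measure by the area formula), at $\mathcal{H}^{k+1}$-a.e.\ $p = F_i(t,y) \in F_i(A_i)$ the tangent $T_p\mathcal{M}$ contains a vector with nonzero $\partial_t$-component, so $\partial_t \not\perp T_p\mathcal{M}$.

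\textbf{Plan for the hard direction} ($\Rightarrow$): I would first reduce to a $C^1$ setting. By the standard refinement of rectifiability (Whitney--Calderon--Zygmund applied to the Lipschitz charts, followed by Sard to delete critical points), any $(k+1)$-rectifiable set is contained, up to an $\mathcal{H}^{k+1}$-null set, in a countable union $\bigcup_j N_j$ of embedded $C^1$ $(k+1)$-submanifolds of $\mathbb{R}^{1,n}$. It therefore suffices to show that the time-advancing part of each $N = N_j$ is vertically rectifiable.

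Fix $p\in N$ with $\partial_t \not\perp T_p N$. Then the $C^1$ function $\pi_t|_N: N\to\mathbb{R}$ has nonzero gradient at $p$, so the implicit function theorem (in local coordinates on $N$) yields a neighborhood $U\ni p$ in $N$, an open set $A \subseteq \mathbb{R}^{1,k}$, and a $C^1$ diffeomorphism $F: A \to U$ satisfying $\pi_t(F(t,y)) = t$; this $F$ is vertical. On any relatively compact open subset $A'\subset A$ with $\overline{A'}\subset A$, the derivative $DF$ is bounded on $\overline{A'}$, so $F|_{A'}$ is $M$-Lipschitz for some $M$; and the spatial block $D_y F$ is uniformly bounded on $\overline{A'}$, giving a uniform slice-Lipschitz constant $m$. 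Hence $F|_{A'}$ is $(M,m)$-Lipschitz. Covering the time-advancing subset of $N$ by countably many such charts, using second countability of $N$, exhibits it as vertically rectifiable.

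\textbf{Expected main obstacle}: The substantive content lies entirely in $(\Rightarrow)$. The two delicate steps are (i) the upgrade from Lipschitz to $C^1$ rectifiability, without which the implicit function theorem is unavailable, and (ii) the need to keep \emph{both} Lipschitz constants $M$ and $m$ finite simultaneously, which forces one to restrict to relatively compact pieces of the $C^1$ chart domain and patch countably many such together. The $(\Leftarrow)$ direction is a direct tangent-space calculation and should be routine.
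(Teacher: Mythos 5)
Your proposal is correct and follows essentially the same route as the paper: decompose into $C^1$ submanifolds, apply the implicit/inverse function theorem with $t$ as the first coordinate, shrink to get simultaneous control of both Lipschitz constants, and take a countable cover (the paper uses a Vitali covering where you invoke second countability, an immaterial difference). Your explicit tangent-space computation for the converse is a fleshed-out version of what the paper dismisses as ``clear.''
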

\begin{proof} Assume $\mathcal{M}$ is time advancing. By rectifiability, write $\mathcal{M}=\mathcal{M}_0 \cup \bigcup_{i=1}^{\infty}\mathcal{M}_i$ where $\mathcal{H}^{k+1}(\mathcal{M}_0)=0$  and $\mathcal{M}_i \subseteq \mathcal{N}_i$ where $\mathcal{N}_i$ is an embedded $C^1$ submanifold in $\mathbb{R}^{1,n}$. We can therefore work on each $\mathcal{M}_i$ separately. Given $p \in \mathcal{M}_i$ with $\partial_t$ not perpendicular to $T_p\mathcal{M}_i$, this non-perpendicularity will also hold in an arbitrarily small ball around it in $\mathcal{N}_i$. In a yet smaller ball, we will be able to use the inverse function theorem with the first co-ordinate being $t$. Restricting it a little further will give an $(M,m)$ Lipschitz map. By Vitali covering we can get such a cover of the set and by the disjointedness of small balls, there are only countably many elements in that cover. The other implication is clear (and less important). 
\end{proof}

\vspace{5 mm}

At a stark contrast to the full dimensional case (see Lemma \ref{top_dim_comp}), the lower dimensional parabolic Hausdorff measures are far from the Euclidean ones, as the following example indicates.
\begin{example}\label{1d_example}
Let $F:\mathbb{R}^{1,0} \supset [a,b] \rightarrow \mathbb{R}^{1,1}$ be vertical, Lipschitz and increasing. Then $\mathcal{H}^{2}_{par}(F([a,b]))=\frac{\alpha(2)}{\alpha(1)}(b-a)$.
\end{example}
\begin{proof}  Let $M$ be the Lipschitz constant of $F$. For $\delta_{0}$ sufficiently small we have for every $\delta<\delta_{0}$ $M\delta^{2} < \delta$.  Thus, for every parabolic $\delta$ cover of $[a,b]\subseteq \mathbb{R}^{1,0}$  by $C_{i}$ and for every $t,s\in C_{i}$ we have $|t-s|<\delta^{2}$ so $|F(t)-F(s)|<\delta$. Thus, $F(C_i)$ is a $\delta$ cover of $F([a,b])$.  Similarly  we see that $diam_{par}(F(C_{i}))=diam_{par}(C_{i})$. Thus
\begin{equation}
\mathcal{H}^{2}_{par}(F([a,b]))\leq \mathcal{H}^{2}_{par}(([a,b]))=\frac{\alpha(2)}{2\alpha(1)}\mathcal{H}^{1}([a,b]). 
\end{equation}
The other direction is trivial. 
\end{proof}

\vspace{5 mm}

The main difference between the parabolic and Euclidean Hausdorff measures is captured by the following volume dilation estimate.

\begin{lemma}[Basic volume estimate]
Let $F:\mathbb{R}^{1,k} \supseteq A \rightarrow \mathbb{R}^{1,n}$ be an $(M,m)$ Lipschitz map. Then
\begin{equation}\label{basic_vol_est}
\mathcal{H}^{k+2}_{par}(F(A)) \leq \mathrm{max}\{m^{k},m^{k+2}\}\mathcal{H}^{k+2}_{par}(A). 
\end{equation}
\end{lemma}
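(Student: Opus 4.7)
The plan is to cover $A$ efficiently by axis-parallel parabolic rectangles $C_i = I_i \times Q_i$, where $I_i \subseteq \mathbb{R}$ has length $h_i^2$ and $Q_i \subseteq \mathbb{R}^k$ is a spatial cube of side $h_i \leq \delta$, chosen so that $\sum h_i^{k+2}$ is comparable to $\mathcal{H}^{k+2}_{par}(A)$. I will estimate the parabolic diameter of $F(C_i \cap A)$, sum over $i$, and let $\delta \to 0$.

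Since $F$ is vertical, $F(C_i \cap A)$ has time extent at most $h_i^2$. For the spatial extent, for any $p_1 = (t_1, x_1), p_2 = (t_2, x_2) \in C_i \cap A$ I apply the triangle inequality through the auxiliary midpoint $(t_1, x_2)$; if this point does not lie in $A$, I first extend $F$ slicewise (via Kirszbraun) to $\tilde F : \mathbb{R}^{1,k} \to \mathbb{R}^{1,n}$, preserving verticality and the slice $m$-Lipschitz constant, with some global Lipschitz constant $\tilde M = \tilde M(M,m,k,n)$. This yields
\[
|F(p_1)_{space} - F(p_2)_{space}| \leq m|x_1 - x_2| + \tilde M|t_1 - t_2| \leq m\sqrt{k}\, h_i + \tilde M h_i^2,
\]
so that $diam_{par}(F(C_i \cap A)) \leq \max\{h_i,\ m\sqrt{k}\, h_i + \tilde M h_i^2\}$, in which the $\tilde M h_i^2$ term is subdominant to $m h_i$ as $\delta \to 0$.

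If $m \geq 1$ the spatial term dominates, giving $diam_{par}(F(C_i \cap A)) \leq m h_i (1 + o(1))$; raising to the $(k+2)$-th power, summing, and letting $\delta \to 0$ yields $\mathcal{H}^{k+2}_{par}(F(A)) \leq m^{k+2}\, \mathcal{H}^{k+2}_{par}(A)$, with dimensional constants such as $k^{(k+2)/2}$ absorbed into the standard normalization. If $m < 1$ the naive parabolic diameter is only $h_i$, so one must refine: cover each $F(C_i \cap A)$ by sub-rectangles of time length $m^2 h_i^2$ and spatial side $m h_i$, each of parabolic diameter $\sim m h_i$. Since $F(C_i \cap A)$ has time extent $h_i^2$ and space extent $\sim m h_i$, this needs $\lceil 1/m^2 \rceil$ time subdivisions and $O_k(1)$ spatial subdivisions, contributing $\sim (1/m^2)(m h_i)^{k+2} = m^k h_i^{k+2}$ per $C_i$, so $\mathcal{H}^{k+2}_{par}(F(A)) \leq m^k\, \mathcal{H}^{k+2}_{par}(A)$.

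Combining both regimes gives the claimed $\max\{m^k, m^{k+2}\}$ bound. The main obstacle will be rigorously justifying the slicewise extension: a naive slice-by-slice Kirszbraun can be badly discontinuous in $t$, so one must argue that some measurable choice yields a finite global Lipschitz constant $\tilde M$, or alternatively verify the triangle inequality directly using $M$-Lipschitz interpolation through dense points of $A$. Since the $\tilde M h_i^2$ error vanishes in the limit $\delta \to 0$, the final bound depends only on $m$, matching the $M$-free form of the lemma.
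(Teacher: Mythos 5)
Your overall architecture (product covers, verticality controlling the time extent, a triangle-inequality bound on the spatial extent, and a $1/m^2$ time-subdivision to upgrade $m^{k+2}$ to $m^{k}$ when $m<1$) matches the paper's proof. But the step you yourself flag as "the main obstacle" is a genuine gap, and it is precisely the point the paper's proof is built around. A slicewise Kirszbraun extension $\tilde F$ cannot in general be given a global Lipschitz constant $\tilde M(M,m,k,n)$: take $A=\{(0,0)\}\cup\{(\eps,R)\}\subseteq\mathbb{R}^{1,1}$ with $F$ mapping these to points at distance $\approx M\sqrt{\eps^2+R^2}\approx MR$ (each slice is a single point, so the slice condition is vacuous and the extension of each slice is constant). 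Then $|\tilde F(0,R)-\tilde F(\eps,R)|\approx MR$ while the time gap is $\eps$, so the $t$-Lipschitz constant of $\tilde F$ blows up like $R/\eps$. This is exactly why the paper remarks that "there is no effective extension theorem" in the $(M,m)$ setting and why its Step 3 exists. Since the auxiliary point $(t_1,x_2)$ you route through need not be anywhere near $A$ within its own time slice, the error term is not $O(\tilde M h_i^2)$ for any finite $\tilde M$, and your main estimate does not close.

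The fallback you mention in one clause ("interpolation through dense points of $A$") is the actual proof, and it requires real work that your proposal omits: one decomposes $A=\bigcup_i A_i\cup B$ where $A_i$ consists of points of $\mathcal{H}^{k+1}$-density $\geq 99/100$ at all scales $<1/i$ and $B$ is Lebesgue-null (so $\mathcal{H}^{k+2}_{par}(B)=0$ by top-dimensional compatibility, and $\mathcal{H}^{k+2}_{par}(F(B))=0$ using only the global $M$-Lipschitz bound). For density points $(t,x),(s,y)$ in a small cover element $C_j$ one finds genuine points $(r,x_r),(r,y_r)\in A$ at a \emph{common} time $r\in[s,t]$ with $|x-x_r|,|y-y_r|\leq diam_{par}(C_j)^2$, and chains $F(t,x)\to F(r,x_r)\to F(r,y_r)\to F(s,y)$, using $M$ on the two short legs and $m$ on the horizontal leg; this yields the $m\,diam_{par}(C_j)+O(M\,diam_{par}(C_j)^2)$ bound legitimately. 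A secondary issue: the lemma asserts the clean constant $\max\{m^k,m^{k+2}\}$, so you cannot afford to "absorb dimensional constants such as $k^{(k+2)/2}$" from passing to cube covers with $\sum h_i^{k+2}$ merely \emph{comparable} to $\mathcal{H}^{k+2}_{par}(A)$; the paper instead works with arbitrary efficient parabolic covers and only replaces each $C_j$ by the product of its projections, which does not change the parabolic diameter because the metric is a maximum.
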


\begin{proof} The proof is divided into four steps. In the first we consider what happens when $\mathcal{H}^{k+2}_{par}(A)=0$, in the second and third we derive the weaker inequality
\begin{equation}\label{weak_ineq}
\mathcal{H}^{k+2}_{par}(F(A)) \leq \mathrm{max}\{1,m^{k+2}\}\mathcal{H}^{k+2}_{par}(A),
\end{equation} 
and in the fourth we prove the strong inequality. 

\noindent\textbf{Step 1:} If $\mathcal{H}^{k+2}_{par}(A) =0$ then $\mathcal{H}^{k+2}_{par}(F(A)) =0$: Let $\{C_{i}\}$ be a parabolic $\delta$ cover of $A$  then $diam_{par}(F(C_{i}))\leq M\cdot diam_{par}(C_{i})$ from which it is clear.

\noindent\textbf{Step 2:} (\ref{weak_ineq}) holds if $A$ is a box, i.e. a set of the form $I \times B$  for $I\subseteq \mathbb{R}$ and $B\subseteq \mathbb{R}^{k}$: Take $\delta>0$ and let $\{C_i\}$ be a parabolic $\delta$ cover of $A$. Then $F(C_{i})$ is a cover of $F(A)$ and for every $(t,x),(s,y)\in A$ we have 
\begin{equation}
d_{par}(F(t,x),F(s,y))\leq \mathrm{max}\{\sqrt{|t-s|}, m|x-y|+M|t-s|\},  
\end{equation}
which implies
 \begin{equation}
\begin{aligned}
&diam_{par}(F(C_{i})) \leq \\
&\mathrm{max}\{diam_{par}(C_{i}),m\cdot diam_{par}(C_{i})+M\cdot diam_{par}(C_{i})^{2}\}\leq \\
&diam_{par}(C_{i}) \cdot \mathrm{max}\{1,m+M\delta\}.
\end{aligned}
\end{equation} 
Thus, assuming $\delta<1$ we obtain

\begin{equation}
\mathcal{H}^{k+2}_{par,\mathrm{max}\{1,m+M\}\delta}(F(A)) \leq \mathrm{max}\{1,m+M\delta\}^{k+2}\mathcal{H}^{k+2}_{par,\delta}(A)
\end{equation} 
and the desired result is obtained by taking $\delta \rightarrow 0$.

\noindent\textbf{Step 3:} In the general case, write $A=\bigcup A_{i}\cup B$ where
\begin{equation}
A_{i}=\{x\in A \mbox{ s.t } \Theta^{k+1}_{Euc}(x,A,r) \geq \frac{99}{100}  \mbox{ for every } 0<r<\frac{1}{i}\}   
\end{equation}
and
\begin{equation}
\Theta^{k+1}_{Euc}(x,A,r)=\frac{\mathcal{H}^{k+1}(A\cap B(x,r))}{\omega_{k+1}r^{k+1}}.
\end{equation}
 Then $\mathcal{H}^{k+2}_{par}(B)=\mathcal{L}^{k+1}(B)=0$ and $A_{i}\nearrow A-B$. Thus $\mathcal{H}^{k+2}_{par}(F(B))=0$,  $F(A)=\bigcup F(A_{i}) \cup F(B)$ and $F(A_{i}) \nearrow F(A)$ up to measure $0$. Thus, it suffices to show the desired weak inequality (\ref{weak_ineq}) for $A_{i}$.
Take $0<\delta<\frac{1}{i}$ and let $\{C_j\}$ be a parabolic $\delta$ cover of $A_{i}$ and assume further that $C_j \subseteq A_i$. Then $\{F(C_{j})\}$ is a cover of $F(A_{i})$ and for every $(t,x),(s,y)\in C_{j}$ (assume w.l.o.g. $s\leq t$) we have $|t-s|\leq diam_{par}(C_{j})^{2}$. Since both $(t,x)$ and $(s,y)$ are points of density at scale $\frac{1}{i}$ there will be some $s\leq r \leq t$ and points $x_{r},y_{r}\in \mathbb{R}^{k}$ with $|x_r-x|\leq diam_{par}(C_{j})^{2},|y-y_r| \leq diam_{par}(C_{j})^{2}$ and such that $(r,x_r),(r,y_r)\in A$. Thus, by the triangle inequality we get
\begin{equation}
\begin{aligned}
&d(\pi_{x}(F(t,x)),\pi_{x}(F_{2}(s,y)))\leq \\
&M\sqrt{|x-x_r|^{2}+|t-r|^{2}}+m|x_r-y_r|+M\sqrt{|y-y_r|^{2}+|s-r|^{2}} \leq \\
&4M\cdot diam_{par}(C_{i})^{2} +m(|x-y|+2diam_{par}(C_{i})^{2}) 
\end{aligned}
\end{equation} 
and the proof continues as in step 2.

\noindent\textbf{Step 4:} For the improved estimate (\ref{basic_vol_est}) note first that  we may assume $m<1$ or else it is equivalent to (\ref{weak_ineq}). Turning $C_j$ into a product set does not increase the parabolic diameter (because of the ``max''). Note further that if $diam_{par}(C_j)>max_{(t,x),(s,y)\in C_j}|x-y|$, it will be worthwhile to split $C_j$ into smaller product sets with the same space-like factor. Thus, in the product case (step 2) we can assume $C_j=[s,s+a^2]\times B_j$ where $diam(B)=b$ and $a \leq b$ . But then, we can split $[s,s+a^2]$ into $\frac{1}{m^2}$ intervals $I_{j,k}$, each of which of  length $m^2a^2$ and consider the cover $F(I_{j,k}\times B)$ of $F(C_j)$. Note that
\begin{equation}
diam_{par}(F(I_{j,k}\times B_j))\leq max\{ma,mb+Mm^2a^2\}=mb+Mm^2a^2\leq m\cdot diam_{par}(C_j)(1+M^2m\delta) 
\end{equation}      
Keeping in mind that we obtained $\frac{1}{m^2}$ such split boxes, this gives the desired result. In the general case, we argue as in step 3. 
\end{proof}

\begin{remark}
Note that in the $(M,m)$ Lipschitz setting, there is no effective extension theorem, in contrast to the Euclidean Lipschitz case, in which Kirszbraun's extension theorem  (see \cite[ Sec. 2.10.43]{Fed}) allows one to assume that the map is defined on the entire space (with the same Lipschitz constant). Thus the general assertion did not follow trivially from the one on boxes, and the third step was indeed needed. 
\end{remark}

\vspace{5 mm}

Motivated by the above, we make the following definition.
\begin{definition}
Suppose $F:\mathbb{R}^{1,k} \supset A \rightarrow \mathbb{R}^{1,n}$ is $(M,m)$  Lipschitz. The \textbf{horizontal differential} of $F$ at $(t_{0},x_{0})\in A$: $D^{h}F|_{(t_{0},x_{0})}$  is the differential of the map $F_{2}(t_{0},-):A\cap\{t=t_{0}\} \rightarrow \mathbb{R}^{n}$. The \textbf{horizontal Jacobian} $J^{h}F|_{(t_{0},x_{0})}$ is the Jacobian of that map. 
\end{definition}

\begin{remark}
Note that the above is well defined a.e. Indeed, by Fubini $A\cap\{t=t_{0}\}$ is measurable for almost every $t_{0}$ and we can Lipschitz extend in every such level set. The resulting differential is independent of the extension at points of density.
\end{remark}

\begin{definition}\label{vert_lin}
An $(n+1)\times (k+1)$ matrix $B$ is called \textbf{vertically linear} if it is of the form
\begin{equation}
B=\left(\begin{array}{cc}
1 & 0\\
v & A
\end{array}\right)
\end{equation}
for $v \in \mathbb{R}^{n}$ and $A$ an $n\times k$ matrix.
\end{definition}
\begin{remark}
The differential of an $(M,m)$ Lipschitz map is vertically linear.
\end{remark}

The following three auxiliary lemmas concerning the parabolic Hausdorff measure have their direct Euclidean analogues (see \cite[Sec 3.3.1 Lemmas 1-3]{EG}) with almost identical proofs. We will shortly remark about the (essentially cosmetic) differences.

\begin{lemma}
Suppose $F:\mathbb{R}^{1.k} \rightarrow \mathbb{R}^{1,n}$ is vertically linear,  then
\begin{equation}
\mathcal{H}^{k+2}_{par}(F(C))=(J^{h}F)\mathcal{H}^{k+2}_{par}(C).
\end{equation}
\end{lemma}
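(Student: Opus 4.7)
The plan is to reduce to the top-dimensional case where Lemma \ref{top_dim_comp} applies, by factoring $F$ into pieces whose effect on $\mathcal{H}^{k+2}_{par}$ is easily controlled. Writing the vertically linear map as $F(t,x) = (t, vt + Ax)$, I would factor $F = G \circ L$, where $L(t,x) = (t,Ax)$ is the pure spatial linear map, and $G(t,y) = (t, y + vt)$ is a time-dependent shear. Since $J^h F = \sqrt{\det(A^T A)}$ and the $v$ part does not enter the horizontal differential, the target Jacobian factor comes entirely from $L$.

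First, I would show that the shear $G$ preserves $\mathcal{H}^{k+2}_{par}$. For any two points,
\begin{equation}
d_{par}(G(t,y),G(s,z)) = \max\{\sqrt{|t-s|},\, |y-z + v(t-s)|\} \leq d_{par}((t,y),(s,z))\bigl(1+|v|\,d_{par}((t,y),(s,z))\bigr),
\end{equation}
so that for any $\delta$-cover $\{C_i\}$ of a set $E$, $\{G(C_i)\}$ is a $\delta(1+|v|\delta)$-cover of $G(E)$ with $\sum \mathrm{diam}_{par}(G(C_i))^{k+2} \leq (1+|v|\delta)^{k+2} \sum \mathrm{diam}_{par}(C_i)^{k+2}$. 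Letting $\delta \to 0$ gives $\mathcal{H}^{k+2}_{par}(G(E)) \leq \mathcal{H}^{k+2}_{par}(E)$; applying the same argument to $G^{-1}(t,y)=(t,y-vt)$ yields the reverse inequality. Thus $\mathcal{H}^{k+2}_{par}(F(C)) = \mathcal{H}^{k+2}_{par}(L(C))$.

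Next, I would handle $L$ via singular value decomposition $A = U_1 \Sigma V^{T}$, where $V \in O(k)$, $U_1:\mathbb{R}^k \hookrightarrow \mathbb{R}^n$ is an isometric embedding (so $U_1^T U_1 = I_k$), and $\Sigma = \mathrm{diag}(\sigma_1,\ldots,\sigma_k)$. Lifting each factor by acting trivially on the time coordinate, write $L = L_U \circ L_\Sigma \circ L_V$ with $L_U, L_V, L_\Sigma$ the corresponding vertical maps. Then $L_V:\mathbb{R}^{1,k} \to \mathbb{R}^{1,k}$ and $L_U:\mathbb{R}^{1,k} \to \mathbb{R}^{1,n}$ are Euclidean (hence parabolic) isometries onto their images, so they preserve $\mathcal{H}^{k+2}_{par}$. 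The middle map $L_\Sigma:\mathbb{R}^{1,k} \to \mathbb{R}^{1,k}$ is top-dimensional, and by Lemma \ref{top_dim_comp} we have $\mathcal{H}^{k+2}_{par} = c_2 \mathcal{L}^{k+1}$ on $\mathbb{R}^{1,k}$; since $L_\Sigma$ is linear on $\mathbb{R}^{k+1}$ with Jacobian $\prod \sigma_i$, it scales $\mathcal{L}^{k+1}$, and therefore $\mathcal{H}^{k+2}_{par}$, by $\prod \sigma_i = \sqrt{\det(A^T A)} = J^h F$. In the degenerate case $\mathrm{rank}(A) < k$, the image $L(\mathbb{R}^{1,k}) \subseteq \mathbb{R} \times \mathrm{Im}(A)$ has parabolic Hausdorff dimension $\mathrm{rank}(A) + 2 < k+2$ (per the Remark following the definition of $d_{par}$), so both sides vanish.

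The step I expect to be the main (minor) obstacle is verifying that the shear $G$ truly preserves $\mathcal{H}^{k+2}_{par}$: unlike the Euclidean setting, $G$ is not a parabolic isometry (the factor $|v|(t-s)$ is linear, not sub-linear, in $|t-s|$), and only becomes asymptotically isometric at small scales. The key point is that in the parabolic metric, $|t-s| \leq d_{par}^2$ is quadratically small in the diameter, which is precisely what makes the $(1+|v|\delta)$-distortion vanish in the limit; once this point is handled, the remainder reduces cleanly to SVD and the Lebesgue compatibility established earlier.
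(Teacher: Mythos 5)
Your proof is correct and follows essentially the same route as the paper: the paper writes $F=\tilde N\tilde O\tilde S$ via the polar decomposition $A=O\circ S$, observes that the shear $\tilde N$ and its inverse are $(|v|+1,1)$-Lipschitz (so preserve $\mathcal H^{k+2}_{par}$, exactly your quadratic-distortion covering argument), that the orthogonal factor is a parabolic isometry, and reduces the remaining self-map of $\mathbb{R}^{1,k}$ to Lemma \ref{top_dim_comp}. Your use of the SVD in place of the polar decomposition and your explicit treatment of the degenerate-rank case are only cosmetic differences.
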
 
\proofsketch  Writing  $A=O \circ S$ for $S:\mathbb{R}^{k}\rightarrow \mathbb{R}^{k}$ symmetric and $O:\mathbb{R}^{k}\rightarrow \mathbb{R}^n$ orthogonal we see that
\begin{equation}
F=\left(\begin{array}{cc}
1 & 0\\
v & Id
\end{array}\right)\left(\begin{array}{cc}
1 & 0\\
0 & O
\end{array}\right)\left(\begin{array}{cc}
1 & 0\\
0 & S
\end{array}\right)=\tilde{N}\tilde{O}\tilde{S}
\end{equation}

As both $\tilde{N}$ and its inverse are $(|v|+1,1)$ Lipschitz, $\tilde{N}^{-1}$ will preserve $\mathcal{H}^{k+2}_{par}$ (the proof of (\ref{basic_vol_est}) will work the same for $\tilde{O}\circ \tilde{S}(C)$ as it is Euclidean). Thus 
\begin{equation}
\mathcal{H}^{k+2}_{par}(F(C))=\mathcal{H}^{k+2}_{par}(\tilde{O}^{*}\tilde{N}^{-1}F(C))=\mathcal{H}^{k+2}_{par}(\tilde{S}(C)) 
\end{equation}
 so  by Lemma \ref{top_dim_comp} we are back in the Euclidean case. $\Box$

\begin{lemma}
Suppose $k\geq 1$ , $F:\mathbb{R}^{1.k} \supset A \rightarrow \mathbb{R}^{1,n}$ is $(M,m)$  Lipschitz for $A$ $\mathcal{H}^{k+2}_{par}$ measurable. Then:
\begin{enumerate}
\item $F(A)$ is $\mathcal{H}^{k+2}_{par}$ measurable.
\item The mapping $y \mapsto \mathcal{H}^{0}(A\cap F^{-1}\{y\})$ is $\mathcal{H}^{k+2}_{par}$ measurable on $\mathbb{R}^{1,n}$.
\item $\int_{\mathbb{R}^{1,n}}\mathcal{H}^{0}(A\cap F^{-1}\{y\})d\mathcal{H}^{k+2}_{par} \leq \max\{m^{k},m^{k+2}\}\mathcal{H}^{k+2}_{par}(A)$ .
\end{enumerate}
\end{lemma}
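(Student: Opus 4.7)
The plan is to mirror the Euclidean proofs in Evans–Gariepy, Section 3.3.1 Lemmas 1--3, using the parabolic volume estimate \eqref{basic_vol_est} in place of the usual Lipschitz area estimate. The point is that those arguments are largely formal set-theoretic manipulations on top of a single metric ingredient (a Lipschitz area bound), and that ingredient has already been supplied by the previous lemma.

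For part (1), I would first invoke Borel regularity of $\mathcal{H}^{k+2}_{par}$ (which is a metric outer measure, hence Borel regular by Caratheodory) to write any $\mathcal{H}^{k+2}_{par}$ measurable set $A$ as $A = N \cup \bigcup_{i=1}^\infty K_i$, with each $K_i$ compact and $\mathcal{H}^{k+2}_{par}(N) = 0$; after restricting to finite-measure pieces (exhausting by a countable family if necessary) this is standard from inner approximation of Hausdorff measures by $K_\sigma$ sets. Since $F$ is continuous on each $K_i$, every $F(K_i)$ is compact, hence Borel. By Step~1 of the basic volume estimate, $F(N)$ is $\mathcal{H}^{k+2}_{par}$ null. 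Thus $F(A) = F(N) \cup \bigcup_i F(K_i)$ is $\mathcal{H}^{k+2}_{par}$ measurable.

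For parts (2) and (3), for each $j \geq 1$ partition $\mathbb{R}^{1,k}$ into countably many disjoint Borel sets $\{Q_i^j\}_{i}$ of diameter at most $1/j$ (e.g.\ half-open dyadic cubes), refining as $j \to \infty$, and set
\begin{equation}
g_j(y) = \sum_{i=1}^\infty \chi_{F(A \cap Q_i^j)}(y).
\end{equation}
By part (1) each $g_j$ is $\mathcal{H}^{k+2}_{par}$ measurable, and $g_j(y) \nearrow \mathcal{H}^0(A \cap F^{-1}\{y\})$ as $j \to \infty$ because the partitions refine (the counting measure of a finite set equals the supremum over refining partitions of the number of cells it meets). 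This gives (2). For (3), the pieces $A \cap Q_i^j$ are disjoint, so by the basic volume estimate applied to the $(M,m)$ Lipschitz restriction $F|_{A \cap Q_i^j}$,
\begin{equation}
\int g_j \, d\mathcal{H}^{k+2}_{par} = \sum_i \mathcal{H}^{k+2}_{par}(F(A \cap Q_i^j)) \leq \max\{m^k, m^{k+2}\} \sum_i \mathcal{H}^{k+2}_{par}(A \cap Q_i^j) = \max\{m^k, m^{k+2}\} \mathcal{H}^{k+2}_{par}(A).
\end{equation}
Monotone convergence then yields the stated inequality.

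The main obstacle is really the Borel regularity step in (1), because unlike Kirszbraun-extendable Lipschitz maps we cannot immediately pass from measurable to Borel modulo null; however, since $\mathcal{H}^{k+2}_{par}$ is a metric outer measure on a separable metric space, the standard inner $K_\sigma$ approximation of finite-measure Borel sets is available and suffices. Everything else is bookkeeping: the injectivity issues that complicate the Euclidean area formula disappear here because we are only bounding $\int \mathcal{H}^0$ from above, so a crude partition argument is enough and no approximation of $F$ by its horizontal differential is needed.
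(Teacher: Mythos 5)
Your proof is correct and takes essentially the same route the paper intends: it is the Evans--Gariepy argument (inner approximation by compacta for measurability of the image, refining Borel partitions plus monotone convergence for the multiplicity bound), with the Euclidean Lipschitz area estimate replaced throughout by the parabolic $(M,m)$ estimate \eqref{basic_vol_est}. The only remark worth adding is that the regularity step you worry about is immediate here, since by Lemma \ref{top_dim_comp} the measure $\mathcal{H}^{k+2}_{par}$ on the domain $\mathbb{R}^{1,k}$ is a constant multiple of Lebesgue measure, hence Radon and inner regular.
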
 
\proofsketch The only difference here is that the standard Euclidean estimate  
\begin{equation}
\mathcal{H}^{n}(F(A)) \leq \mathrm{Lip}(F)^{n}\mathcal{H}^{n}(A)
\end{equation}
 is replaced by the corresponding parabolic estimate for $(M,m)$ Lipschitz functions (\ref{basic_vol_est}). $\Box$

We will often confuse a linear map $T:\mathbb{R}^k\rightarrow \mathbb{R}^n$ with the corresponding vertical map from $\mathbb{R}^{1,k}$ to $\mathbb{R}^{1,n}$ that splits time and space (i.e. $v=0$ in Definition \ref{vert_lin}).  
\begin{lemma}\label{partition_lemma}
Let $F:\mathbb{R}^{1.k} \supset A \rightarrow \mathbb{R}^{1,n}$ be an $(M,m)$ Lipschitz map , let $\alpha>1$ and let $B=\{x\in A \mbox{ s.t. } D^{h}F \mbox{ exists and } J^{h}F>0\}$. Then there is a countable collection of Borel subsets $\{E_{j}\}$ of $\mathbb{R}^{1,k}$ such that:
\begin{enumerate}
\item $B=\bigcup_{j=1}^{\infty} E_{j}$.
\item $F|_{E_{j}}$ is one to one.
\item for each $j$ there is a  symmetric automorphism  $T_{j}:\mathbb{R}^{k} \rightarrow \mathbb{R}^{k}$ such that (identifying it with the corresponding vertical map from $\mathbb{R}^{1,k}$ to $\mathbb{R}^{1,k}$):
\begin{enumerate}
\item $F|_{E_{j}}\circ T_{j}^{-1}$ is $(M_j,\alpha)$ Lipschitz. 
\item $T_{j}\circ (F|_{E_{j}})^{-1} $ is $(M_j,\alpha)$ Lipschitz.
\item $ \alpha^{-k}|\det T_{j}| \leq J^{h}F|_{E_{j}} \leq  \alpha^{k}|\det T_{j}|$.
\end{enumerate}
\end{enumerate}
\end{lemma}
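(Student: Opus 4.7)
The plan is to adapt the classical Federer/Evans--Gariepy partition argument (\cite[3.3.1, Lemma 3]{EG}) to the vertical parabolic setting. The only additional feature is that the linear models $T_j$ model solely the horizontal behaviour of $F$, while verticality handles the time coordinate for free.

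First I would set up the countable family of linear models. At every $x\in B$ the horizontal differential $D^{h}F(x)\colon \mathbb{R}^{k}\to\mathbb{R}^{n}$ is a well-defined injection since $J^{h}F(x)>0$; write its polar decomposition $D^{h}F(x)=O(x)S(x)$ with $S(x)$ symmetric positive-definite on $\mathbb{R}^{k}$ and $O(x)$ an isometric embedding. Pick a countable dense collection $\{T_{j}\}_{j\geq 1}$ of symmetric automorphisms of $\mathbb{R}^{k}$ and, for each $j$ and each integer $i\geq 1$, let $E(j,i)\subseteq B$ consist of all $x=(t_{0},x_{0})$ satisfying
\begin{enumerate}[label=(\roman*)]
\item $\alpha^{-1/2}\norm{T_{j}v}\leq \norm{S(x)v}\leq \alpha^{1/2}\norm{T_{j}v}$ for every $v\in \mathbb{R}^{k}$, and
\item for every $(t_{0},z)\in A$ with $\abs{z-x_{0}}\leq 1/i$,
\[
\norm{F(t_{0},z)-F(t_{0},x_{0})-\bigl(0,D^{h}F(x)(z-x_{0})\bigr)}\leq (\alpha^{1/2}-1)\norm{T_{j}(z-x_{0})}.
\]
\end{enumerate}
Each $E(j,i)$ is Borel, by measurability of the horizontal approximate derivative together with a standard Lusin/Egorov argument, and density of $\{T_{j}\}$ yields $\bigcup_{j,i}E(j,i)=B$.

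Next I would disjointify $\{E(j,i)\}$ by the usual subtraction trick, and then slice each resulting Borel piece into countably many Borel subsets of $d_{par}$-diameter at most $1/(2i)$. Relabel these pieces as $\{E_{j}\}$ with associated model $T_{j}$. On each such piece $F$ is injective: vertical maps separate points with different time coordinates automatically via $\pi_{t}\circ F$, while points sharing a time coordinate are separated by the slice-wise horizontal quasi-isometry encoded in (i)--(ii).

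Finally I would verify (3a)--(3c). Identifying $T_{j}$ with its vertical extension (the $v=0$ case of Definition \ref{vert_lin}), the composition $G:=F|_{E_{j}}\circ T_{j}^{-1}$ is vertical, and for two points $(t,x'),(t,y')\in T_{j}(E_{j})$ sharing a time coordinate, (i)--(ii) combine to give $\norm{G(t,x')-G(t,y')}\leq \alpha\norm{x'-y'}$, which is the slice-wise $\alpha$-Lipschitz bound required in (3a). The Euclidean Lipschitz constant $M_{j}$ then follows from combining this slice estimate with the original Euclidean Lipschitz constant $M$ of $F$, the operator norm $\norm{T_{j}^{-1}}$, and verticality (which preserves the time component). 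Condition (3b) follows symmetrically from the lower inequalities in (i) and (ii), and (3c) is immediate from $J^{h}F(x)=\det S(x)$ together with the two-sided comparison in (i). The main obstacle is the coordination of the multiplicative tolerance in (i) with the additive tolerance in (ii): both must be tuned in terms of $\alpha$ so that the triangle inequality closes up to exactly $\alpha$ on slices for both $G$ and $G^{-1}$; all other bookkeeping (measurability, covering $B$, disjointification, injectivity) is standard once the vertical structure has been built into the definitions.
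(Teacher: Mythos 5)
Your construction follows the Evans--Gariepy template, as does the paper's, but it omits one ingredient that the paper's proof is specifically built around, and the omission creates a genuine gap in item (3b). Your conditions (i)--(ii) linearize $F$ only \emph{within} time slices: you compare $F(t_0,z)$ with $F(t_0,x_0)$ for points sharing the time coordinate $t_0$. This does give the slice-wise $\alpha$-bi-Lipschitz control (and (3a), since the Euclidean constant $M_j$ for the forward map really does come for free from $M$ and $\norm{T_j^{-1}}$). But (3b) also requires $T_j\circ(F|_{E_j})^{-1}$ to be Euclidean Lipschitz with \emph{some} finite constant $M_j$, and for two points $(t,a),(s,b)\in E_j$ with $t\neq s$ your hypotheses give no control: you cannot pass through the intermediate point $(t,b)$ because it need not lie in $A$, and (as the paper itself remarks) there is no extension theorem for $(M,m)$ Lipschitz maps. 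Concretely, take $k=n=1$, $A=(\{0\}\times[0,\eta])\cup(\{\delta\}\times[2\eta,3\eta])$ with $\delta=\eta^{10}$, and the vertical map $F(0,x)=(0,x)$, $F(\delta,y)=(\delta,y-2\eta)$. This $F$ is $(2,1)$ Lipschitz, each slice is an isometry (so (i)--(ii) hold with $T_j=\mathrm{Id}$ and any tolerance), and the whole set has parabolic diameter $O(\eta)$, so your construction places it in a single $E_j$; yet $\norm{F(0,\eta)-F(\delta,3\eta)}=\delta$ while $\norm{(0,\eta)-(\delta,3\eta)}\approx 2\eta$, so the inverse has Euclidean Lipschitz constant at least $2\eta^{-9}$. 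Stringing countably many such configurations together with $\eta\to 0$ defeats any fixed $M_j$. Note that this Euclidean bound on the inverse is not decorative: the basic volume estimate \eqref{basic_vol_est} is stated for $(M,m)$ Lipschitz maps, so the inverse must qualitatively be Euclidean Lipschitz for the area-formula argument to go through, even though the value of $M_j$ never enters the final bounds.

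The paper's proof closes exactly this hole by introducing a third countable dense family $\mathbf{W}$ of drift vectors $w$ with first coordinate $1$, replacing $F$ by $G_w(y)=F(y)-\left<y,\partial_t\right>w$, and imposing the first-order Taylor estimate $|G_w(a)-G_w(b)-DG_w(b)(a-b)|\leq\epsilon|T(a-b)|$ on \emph{full space-time} increments $a\in B\cap B(b,2/i)$, not just slice-wise ones. Subtracting the approximate time-drift is what makes a single vertically linear model (and hence a two-sided Euclidean comparison with $T$) valid across different time slices of $E_j$; one then translates back to $F$ to obtain $(|w_j|+1,\alpha)$ Lipschitz maps in both directions. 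You should add the analogous approximation of the time-derivative component of $DF$ to your definition of the pieces. Separately, your explicit tolerances do not quite close for the lower bound ($\alpha^{-1/2}-\alpha^{1/2}+1<\alpha^{-1}$ for $\alpha>1$ near $1$); you flag that tuning is needed, and the paper's $\alpha^{-1}+\epsilon$, $\alpha-\epsilon$, $\epsilon$ scheme is one way to do it, so this part is only a bookkeeping correction.
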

\proofsketch This is also similar to the corresponding lemma \cite[3.3.1.3]{EG}. This time, fixing $\epsilon>0$ we let $C$ be a countable dense subset of $B$, $\mathbf{S}$ be a countable dense subset of the symmetric automorphisms of $\mathbb{R}^{k}$ and $\mathbf{W}$ be a countable dense subset of the vectors in $\mathbb{R}^{1,n}$ with first coordinate $1$. Then for $c\in C,T\in\mathbf{S},w\in \mathbf{W}$ and $i=1,2,3,\ldots$ we define $E(c,T,w,i)$ to be the set of all $b\in B\cap B(c,1/i)$ satisfying  
\begin{equation}
\left(\alpha^{-1}+\epsilon\right)|Tv| \leq |DG_w(b)v| \leq \left(\alpha-\epsilon\right)|Tv|
\end{equation}
for all $v\in \mathbb{R}^{1,k}$  and 
\begin{equation}
|G_w(a)-G_w(b)-DG_w(b)(a-b)|\leq \epsilon|T(a-b)|
\end{equation}
for all $a\in B\cap B(b,2/i)$. Here
\begin{equation}
G_w(y)=F(y)-\left<y,\partial_t\right>w.
\end{equation}
Then for $b\in E(c,T,w,i)$ we have 
\begin{equation}\label{comp_dif}
J^{h}F=J^{h}G_{w}
\end{equation}
 and just like in \cite{EG} we obtain
\begin{equation}
\left(\alpha^{-1}+\epsilon\right)^k\det(T) \leq J^hG_w(b) \leq \left(\alpha-\epsilon\right)^k\det(T)
\end{equation}
Now, choose any $b\in B$ and write $DF(b)=O\circ S+ \left<b,\partial_t\right>u$ (confusing $O,S$ with the corresponding vertical maps) and choose $T\in \mathbf{S}$ with $\mathrm{Lip}(T\circ S^{-1}) \leq \left(\alpha^{-1}+3\epsilon/2\right)^{-1}$ and $\mathrm{Lip}(S\circ T^{-1}) \leq \alpha-3\epsilon/2$ and $w\in \mathbf{W}$ with $|w-u|\leq \epsilon ||T||/2$ and select $i\in \{1,2,\ldots\}$ and $c\in C$ such that $|b-c|<1/i$ and
\begin{equation}
|F(a)-F(b)-DF(b)(a-b)| \leq \frac{\epsilon}{\mathrm{Lip}(T^{-1})}|a-b|
\end{equation} 
for all $a\in B\cap B(b,2/i)$. Then $b\in E(c,T,w,i)$. Renaming the sets $E(c,T,w,i)$ - $E_j$  will yield, just like in \cite{EG} a partition $\{E_{j}\}$ of $B$  with $\mathrm{Lip}(G_{w_j}|_{E_j} \circ T_j^{-1})<\alpha$ and $\mathrm{Lip}(T_j \circ \left(G_{w_j}|_{E_j}\right)^{-1} )<\alpha$  with the desired property. Translating $G_{w_j}$ back to $F$ will therefore give corresponding $(|w_j|+1,\alpha)$ Lipschitz maps and by (\ref{comp_dif}) we are done $\Box$ .

\vspace{5 mm}

We now come to the actual parabolic area formula for $(M,m)$ Lipschitz maps. Its proof is (again) identical to the one of the usual area formula (see \cite[Section 3.3.2]{EG}), with the above lemmas replacing the Euclidean ones and by using the parabolic $(M,m)$ Lipschitz estimate (\ref{basic_vol_est}). 

\begin{theorem}[Parabolic area formula]\label{par_area}
Let $F:\mathbb{R}^{1.k} \supset A \rightarrow \mathbb{R}^{1,n}$ be $(M,m)$ Lipschitz then
\begin{equation}
\int_{A}J^{h}Fd\mathcal{H}^{k+2}_{par}= \int _{\mathbb{R}^{1,n}} \mathcal{H}^{0}(A\cap F^{-1}(y))d\mathcal{H}^{k+2}_{par}(y).
\end{equation}
Moreover, if $g:\mathbb{R}^{1,n}\rightarrow \mathbb{R}$ is measurable
\begin{equation}
\int_{A}(J^{h}F)(g\circ F)d\mathcal{H}^{k+2}_{par}= \int _{\mathbb{R}^{1,n}} \mathcal{H}^{0}(A\cap F^{-1}(y))g(y)d\mathcal{H}^{k+2}_{par}(y).
\end{equation}
\end{theorem}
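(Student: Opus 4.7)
The plan is to imitate the proof of the standard Euclidean area formula, replacing each metric bound used there by its parabolic $(M,m)$-Lipschitz analogue (\ref{basic_vol_est}) and each instance of the usual Jacobian partition lemma by Lemma \ref{partition_lemma}. I would first establish the unweighted identity and then upgrade to the weighted version by approximation of $g$ with simple functions and monotone/dominated convergence, exactly as in \cite[Sec.\ 3.3.2]{EG}; the measurability of both integrands has already been handed to us by the lemma preceding \ref{partition_lemma}.

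To prove the unweighted equality, I would split $A$ into the \emph{regular part} $B=\{x\in A:D^{h}F\text{ exists and }J^{h}F>0\}$ and its complement $Z=A\setminus B$. On $Z$ the integrand on the left vanishes and the goal is to show $\mathcal{H}^{k+2}_{par}(F(Z))=0$, so that the multiplicity on the right is zero a.e.\ on $F(Z)$. For this I would use the standard ``augmentation'' trick: consider for each $\eps>0$ the vertical map $F_{\eps}:A\times\mathbb{R}^{1,k}\to\mathbb{R}^{1,n+k}$ defined by $F_{\eps}(p,q)=(F(p)+\eps(0,q))$, apply the nondegenerate case together with the basic volume estimate (\ref{basic_vol_est}) to get a bound proportional to $\eps^{k}$ on $\mathcal{H}^{k+2}_{par}(F_{\eps}(Z\times[0,1]^{k}))$, and let $\eps\to 0$.

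For the main term, on $B$ I would invoke Lemma \ref{partition_lemma}, replacing the countable cover by its disjointification, to obtain Borel sets $E_{j}$ on each of which $F$ is injective and factors (up to a controlled distortion $\alpha$) through a symmetric automorphism $T_{j}$ of $\mathbb{R}^{k}$. Since $F|_{E_{j}}\circ T_{j}^{-1}$ and its inverse are both $(M_{j},\alpha)$-Lipschitz, the basic volume estimate gives
\begin{equation}
\alpha^{-(k+2)}|\det T_{j}|\,\mathcal{H}^{k+2}_{par}(E_{j})\leq\mathcal{H}^{k+2}_{par}(F(E_{j}))\leq\alpha^{k+2}|\det T_{j}|\,\mathcal{H}^{k+2}_{par}(E_{j}),
\end{equation}
where the equality $\mathcal{H}^{k+2}_{par}(T_{j}(E_{j}))=|\det T_{j}|\,\mathcal{H}^{k+2}_{par}(E_{j})$ comes from Lemma \ref{top_dim_comp} applied on $\mathbb{R}^{1,k}$ (since $T_{j}$ is vertical and acts only on the spatial factor). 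Combining this with the pointwise sandwich $\alpha^{-k}|\det T_{j}|\leq J^{h}F\leq\alpha^{k}|\det T_{j}|$ on $E_{j}$ yields
\begin{equation}
\alpha^{-(2k+2)}\int_{E_{j}}J^{h}F\,d\mathcal{H}^{k+2}_{par}\leq\mathcal{H}^{k+2}_{par}(F(E_{j}))\leq\alpha^{2k+2}\int_{E_{j}}J^{h}F\,d\mathcal{H}^{k+2}_{par}.
\end{equation}
Summing over the disjoint $E_{j}$, using injectivity of $F|_{E_{j}}$ to rewrite $\sum_{j}\mathcal{H}^{k+2}_{par}(F(E_{j}))=\int_{\mathbb{R}^{1,n}}\mathcal{H}^{0}(A\cap F^{-1}(y))\,d\mathcal{H}^{k+2}_{par}(y)$, and then letting $\alpha\downarrow 1$, gives the desired equality. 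The weighted formula then follows by the usual approximation of $g$ by nonnegative simple functions applied to measurable subsets of $A$.

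The only real point of vigilance, and hence the main (mild) obstacle, is the degenerate part $Z$: in the parabolic setting one cannot just invoke a sub-level-set of a determinant since $J^{h}F$ is a spatial quantity, so the augmentation trick must be done with a vertical map to keep us within the $(M,m)$-Lipschitz framework where (\ref{basic_vol_est}) applies. Once this is handled, every remaining step is a bookkeeping exercise transplanting \cite[Sec.\ 3.3.2]{EG} into the parabolic language.
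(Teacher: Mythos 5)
Your proposal is correct and follows essentially the same route the paper takes: the paper's own proof is a four-line sketch that simply transplants \cite[Sec.\ 3.3.2]{EG}, substituting Lemma \ref{partition_lemma} for the Euclidean linearization lemma and the estimate (\ref{basic_vol_est}) for the usual ``Lipschitz maps increase Hausdorff measure by at most $\mathrm{Lip}^{n}$'' bound; your main-term argument (the two-sided $(M_j,\alpha)$-Lipschitz sandwich through $T_j$, Lemma \ref{top_dim_comp} on $\mathbb{R}^{1,k}$, summation over a disjointified cover, and $\alpha\downarrow 1$) is exactly this, written out in full. You are in fact more explicit than the paper about the degenerate set, which the sketch silently defers to \cite{EG}.

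One step needs repair: your augmented map $F_{\eps}$ is defined on the product $A\times\mathbb{R}^{1,k}\subseteq\mathbb{R}^{1,2k}$, so the nondegenerate case of the area formula (a statement about $(M,m)$-Lipschitz maps from subsets of $\mathbb{R}^{1,k}$) cannot be applied to it, and the claimed $\eps^{k}$ bound mixes the ``tube'' picture with the ``graph'' picture. The standard fix stays on the original domain: take $g_{\eps}(t,x)=\bigl(F(t,x),\eps x\bigr)\in\mathbb{R}^{1,n+k}$, which is vertical and $(M',m+\eps)$-Lipschitz with $D^{h}g_{\eps}=\bigl(\begin{smallmatrix}D^{h}F\\ \eps I\end{smallmatrix}\bigr)$, hence $J^{h}g_{\eps}>0$ everywhere while $J^{h}g_{\eps}\leq C(m,k)\,\eps$ on $Z$ (one singular value of $(D^{h}F)^{T}D^{h}F$ vanishes there, so the bound is linear in $\eps$, not of order $\eps^{k}$). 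The nondegenerate case then gives $\mathcal{H}^{k+2}_{par}(g_{\eps}(Z))\leq C\eps\,\mathcal{H}^{k+2}_{par}(Z)$, and composing with the vertical $(1,1)$-Lipschitz projection $\mathbb{R}^{1,n+k}\to\mathbb{R}^{1,n}$ and using (\ref{basic_vol_est}) yields $\mathcal{H}^{k+2}_{par}(F(Z))=0$. With that substitution your argument is complete.
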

\proofsketch We use Lemma \ref{partition_lemma} instead of the usual Euclidean partition lemma. Then, in the original proof, it is crucial to obtain that $\alpha$ Lipschitz maps do not increase volume by much. We have the corresponding result using (\ref{basic_vol_est}) controlling the horizontal Lipschitz-constant. The full Lipschitz constant is of no interest, as it is absent from the estimate. $\Box$   

\subsection{Parabolic Co-Area Formula}\label{Fubini_conclusion}

\begin{proof}[Proof of Theorem D]  By Section \ref{horizontal_set} we know that the contribution of the infinitesimally spatial part of $\mathcal{M}$ to both sides is zero. We can thus suppose that  $\mathcal{M}$ is time advancing or equivalently, vertically rectifiable and in fact, that $\mathcal{M}=F(A)$ for $A \subseteq \mathbb{R}^{1,k}$ and $F:A\rightarrow \mathbb{R}^{1,n}$  $(M,m)$ Lipschitz and one to one. But in this case, by Theorem \ref{par_area}
\begin{equation}
\int _{\mathcal{M}} g d\mathcal{H}^{k+2}_{par}=\int_{A}(J^{h}F)(g\circ F)d\mathcal{H}^{k+2}_{par}=
\end{equation}
by Lemma \ref{top_dim_comp}
\begin{equation}
\begin{aligned}
&c_1\int_{\mathbb{R}^{1,0}}\left(\int_{\mathcal{A}_{t}}(J^{h}F)(g\circ F)d\mathcal{H}^{k}\right)d\mathcal{H}^{2}_{par}(t)=\\
&c_1\int_{\mathbb{R}^{1,0}}\left(\int_{\mathcal{M}_{t}}gd\mathcal{H}^{k}\right)d\mathcal{H}^{2}_{par}(t)
\end{aligned}
\end{equation}
where the last equality is by the Euclidean area formula. 
\end{proof}

\bibliographystyle{alpha}
\bibliography{isoBib}

\vspace{10mm}
{\sc Or Hershkovits, Courant Institute of Mathematical Sciences, New York University, 251 Mercer Street, New York, NY 10012, USA}\\

\emph{E-mail:}  or.hershkovits@cims.nyu.edu

\end{document}